\title[Burnside rings and volume forms with logarithmic poles]{Burnside rings and volume forms with logarithmic poles}
\author{Antoine Chambert-Loir}
\address{%
Université Paris Cité, Institut de Mathématiques de Jussieu-Paris Rive Gauche, F-75013, Paris, France}
\email{antoine.chambert-loir@u-paris.fr}
\author{Maxim Kontsevich}
\address{%
Institut des Hautes \'Etudes Scientifiques, 35 route de Chartres, 91440 Bures-sur-Yvette, France}
\email{maxim@ihes.fr}
\author{Yuri Tschinkel}
\address{Courant Institute, NYU, 251 Mercer St.  New York, NY 10012, USA}
\address{Simons Foundation, 160 5th Av., New York, NY 10010, USA}
\email{tschinkel@cims.nyu.edu}
  \def\Burn{Burn}%
  \def\({}%
  \def\){}%
  \def\alpha{alpha}%
  \def\({}%
  \def\){}%
  \def\texttt#1{<#1>}%
\begin{document}

\date{\today}
 
\begin{abstract}
We develop a theory of Burnside rings in the context
of birational equivalences of algebraic varieties
equipped with logarithmic volume forms.
We introduce a residue homomorphism and 
construct an additive invariant of birational morphisms.
We also define a specialization homomorphism.
\end{abstract}
 
\begin{altabstract}
Nous proposons une théorie d'anneaux de Burnside dans
le contexte de la géométrie birationnelle des variétés
algébriques munies d'une forme volume à pôles logarithmiques.
Nous introduisons un homomorphisme « résidu », construisons
un invariant additif des morphismes birationnels.
Nous définissons aussi un homomorphisme de spécialisation.
\end{altabstract}

\keywords{Birational geometry, Burnside rings, logarithmic volume forms, specialization}
\subjclass{14E08, 14E07, 14D06}

\maketitle

\tableofcontents

\section{Introduction}

The study of birationality of algebraic varieties is a classical 
and well studied subject, with many open problems.
In some cases, it is interesting to study birational maps
preserving additional structure, for example
group actions, symplectic forms, or volume forms. 
Such a study is already implicit in many questions of birational geometry,
\eg,  in the notion of crepant resolution of singularities.

In this paper, we consider the case of varieties
endowed with volume forms with logarithmic poles
and develop  a formalism of Burnside rings 
along the lines of their counterpart introduced
by~\citet{KontsevichTschinkel-2019} to establish
the specialization of rationality, and 
its equivariant version by~\citet{KreschTschinkel-2022}.

Let $k$ be a field of characteristic zero.
For each integer~$n$,
we define 
\[ \Burn_n(k) \]
as the free abelian group
on birational equivalence classses of pairs $(X,\omega)$
consisting of an integral smooth  proper $k$-variety~$X$ of dimension~$n$
equipped with an $n$-form~$\omega$ with at most logarithmic poles.

The graded abelian group
\[ \Burn(k) = \bigoplus_{n\in\N} \Burn_n(k) \]
carries a ring structure, induced by taking products of varieties,
decomposed into irreducible components,
and equipped with the external product of the volume forms. 

In section~\ref{sec.residues},
we define morphisms of abelian groups
\[ \partial \colon \Burn_n(k) \to \Burn_{n-1}(k) .\]
When $X$ is smooth and the polar divisor of~$\omega$ has strict
normal crossings, the image of the class $[X,\omega]$ is given
by the following formula. Let $(D_\alpha)_{\alpha\in\mathscr A}$
be the family of irreducible components of the polar divisor of~$\omega$.
For each subset~$A$ of~$\mathscr A$, the intersection $D_A=\bigcap_{\alpha\in A} D_\alpha$ is a union of integral smooth varieties of codimension~$\Card(A)$;
taking iterated residues, we may equip it 
with a volume form with logarithmic poles~$\omega_A$.
Then
\[ \partial ([X,\omega]) = \sum_{\emptyset\neq A \subset\mathscr A}
 (-1)^{\Card(A)-1} [D_A, \omega_A] \cdot \mathbf T^{\Card(A)-1}, \]
where 
\[ \mathbf T = [\P^1, \mathrm dt/t]. \]
In particular, the existence of the map~$\partial$ relies on the 
birational invariance of this expression,
see theorem~\ref{theo.residue-bir}.

This construction is reminiscent of the boundary map in polar homology
\citep{KhesinRosly-2003,KhesinRoslyThomas-2004,GorchinskiyRosly-2015}.
However, apart from the obvious difference that we only record birational types
of strata, rather than the strata themselves,
our formula takes into account strata of all codimensions, rather
than those of codimension one.

The map~$\partial$  is additive.
Furthermore, we prove in theorem~\ref{theo.partial-deriv} that
\[ \partial(a \cdot b) = \eps^n \cdot \partial (a)\cdot  b
+ a \cdot \partial(b) - \mathbf T \cdot  \partial(a) \cdot \partial(b), \]
when $a\in\Burn_m(k)$ and $b\in\Burn_n(k)$.
Here $\eps$ is the class of the point~$\Spec(k)$
equipped with the volume form equal to~$-1$.

In theorem~\ref{theo.dd=0}, we show that 
\[ \partial \circ \partial  = 0 . \]

Since $\eps^2=1$, the ring $\Burn(k)$ splits as a product
of two rings $\Burn^{\eps=1}(k)$ and $\Burn^{\eps=-1}(k)$, after inverting~$2$.
In \S\ref{sec.alg}, we explain the corresponding simplifications
of the formulas.

Inspired by the constructions of~\cite{LinShinderZimmermann-2020,LinShinder-2022,KreschTschinkel-2022a}, we define in~\S\ref{sec.biraut}
a homomorphism 
\[ \mathbf c \colon \Bir(X,\omega) \to \Burn_{n-1}(k), \]
from the group of birational automorphisms of the pair
$(X,\omega)$, where $X$ is an $n$-dimensional integral proper smooth variety
equipped with a logarithmic volume form~$\omega$.
As in the above references, our map~$\mathbf c$ is defined
at the groupoid level of birational maps preserving logarithmic volume forms.

When the birational isomorphism~$\phi \colon (X,\omega) \dashrightarrow (Y,\eta)$ is described
by a diagram 
\[ \begin{tikzcd}[column sep = small]
 & W \ar{dl}[swap]{p} \ar{dr}{q} \\
X \ar[dashrightarrow]{rr}{\phi} && Y \end{tikzcd} \]
of smooth proper integral $k$-varieties,
with birational morphisms~$p$ and~$q$, the two logarithmic
volume forms~$p^*\omega$ and~$q^*\eta$ on~$W$ are equal,
and the element~$\mathbf c(\phi)\in\Burn_{n-1}(k)$ is given by
\[ \mathbf c(\phi) = 
\sum_{E \in\operatorname{Exc}(q)} [E, p^*\omega_E] - 
\sum_{D \in\operatorname{Exc}(p)} [D, q^*\eta_D] \]
where $\operatorname{Exc}(p)$ is the set of irreducible components
of the exceptional divisor of~$p$, and where, for each such component~$D$,
the logarithmic volume form~$p^*\omega_D$ on~$D$
is obtained by taking the residue of~$p^*\omega$ along~$D$
(and similarly for~$q$).

Finally, consider a discrete valuation ring
with residue field~$k$ and field of fractions~$K$
and let $t$ be a uniformizing element.
In this context, we define a specialization map
\[ \rho_t \colon \Burn_n(K)\to \Burn_n(k). \]
The image of the class $[X,\omega]$  involves the combinatorics of a 
good model $(\mathscr X,\omega)$ over the valuation ring, 
and a certain subcomplex of the Clemens complex of the special fiber.
In the particular case where $\mathscr X$ is smooth,
the polar divisor of~$\omega$ is a relative divisor with normal crossings,
and denoting by~$\omega_o$ the restriction of~$\omega$ 
to the special fiber~$\mathscr X_o$,
one has
\[ \rho_t ([X,\omega])  = [\mathscr X_o,\omega_o]. \]
Note that 
the existence of such a specialization map implies,
as in theorem~1 of~\cite{KontsevichTschinkel-2019},
or as in \citep{NicaiseShinder-2019},
that birational equivalence of varieties with
logarithmic volume forms  is preserved under “good specializations”.

In the geometric case, where the valuation is the local ring of a curve~$C$
at point~$o$, the construction of the specialization map
can be viewed as a restriction to the special fiber
of a normalization of a global residue map~$\partial$
that takes place on a proper model whose special fiber is
 a divisor with normal crossings.
The normalization procedure extracts a subcomplex
of the Clemens complex of the special fiber.
A similar situation appeared in the study of Tamagawa
measures on analytic manifolds \citep{Chambert-LoirTschinkel-2010}.

Related constructions
emerged from the work of~\citet{KontsevichSoibelman-2006}
inspired by mirror symmetry,
and its subsequent developments,
\eg, by \citet{MustataNicaise-2015a, NicaiseXu-2016a, BoucksomJonsson-2017, JonssonNicaise-2020}.

The formalism developed in our paper has already been applied to
settle long-standing questions in higher-dimensional birational
geometry. For instance, \citet{LoginovZhang-2024}
were able to prove that on the complex projective space~$\P^n$ endowed with
the standard torus invariant volume form with logarithmic poles,
the group of volume preserving maps of the
is not generated by pseudo-reflections, if $n\geq4$;
in particular, this group is not simple.  
We anticipate analogs of powerful specialization theorems of Voisin,
and others.

Our constructions use essentially only formal properties of the residue maps.
Consequently, one can envision analogous theories 
for logarithmic forms of smaller degree, Milnor $K$-theory,
or even for the cycle modules of \citet{Rost-1996}.
However, our first attempts in this direction have not yet been successful.

\subsection*{Acknowledgments}
We are grateful to Hsueh-Yung Lin and Evgeny Shinder for having pointed out a lapsus 
in~\S\ref{sec.biraut}.
The third author was partially supported by NSF grant 2000099.

\section{Logarithmic differential forms}

\subsection{Kähler differentials}
Let $k$ be a field of characteristic zero
and $K$ be a finitely generated extension of~$k$;
let $n$ be its transcendence degree.
The space of Kähler differentials $\Omega_{K/k}$
is the $K$-vector space generated
by symbols $\mathrm da$, for $a\in K$, subject to the relations:
\begin{enumerate}
\item For $a\in k$, one has $\mathrm da = 0$;
\item For $a,b\in K$, one has $\mathrm d(a+b)=\mathrm da+\mathrm db$
and $\mathrm d(ab)=a\mathrm db + b(\mathrm da)$.
\end{enumerate}

For any integer~$m\geq0$, we may consider its $m$th exterior
power $\Omega^{m}_{K/k}$, which is a $K$-vector space
of dimension~$\binom nm$; in particular, it vanishes if $m>n$,
$\Omega^1_{K/k}$ has dimension~$n$,
and $\Omega^n_{K/k}$ has dimension~one.
One has $\Omega^0_{K/k}=k$, canonically.

Elements of $\Omega^n_{K/k}$, for $n=\trdeg_k(K)$,
are also called \emph{volume forms}.

For $a\in K^\times$, we also write $\dlog a=\mathrm da/a\in\Omega_{K/k}$.

\subsection{Models}
Let $m$ be an integer and 
let $\omega\in\Omega^m_{K/k}$.
A \emph{model} of~$K$ is an integral $k$-scheme~$X$
together with a $k$-isomorphism $K\simeq k(X)$; we say
that this model is proper, resp.\ smooth, if $X$ is proper,
resp.\ smooth over~$k$. 
Given such a model,
 $\omega$ induces a meromorphic global section~$\omega_X$
of~$\Omega^m_{X/k}$.
The polar ideal of~$\omega_X$ 
is the subsheaf of~$\mathscr O_X$ whose local sections 
are the $a\in\mathscr O_X$
such that $a\omega_X$ is induced by a regular $m$-form. 
Let $D$ be the zero-locus of this ideal. Its complement~$U$
is the largest open subscheme of~$X$ 
such that $\omega_X$ is induced by a regular $m$-form on~$U$.
If $X$ is smooth, then $\Omega^m_{X/k}$ is locally free,
hence the scheme~$D$ is an effective divisor (Hartogs's principle);
we call it the \emph{polar divisor} of~$\omega$ on~$X$.

\subsection{Logarithmic forms}
By Hironaka's theorem on embedded resolution of singularities,
there exist smooth projective  models $(X,\omega_X)$ of $(K,\omega)$
such that the polar divisor~$D$ of~$\omega_X$ has normal crossings.

Following \cite[chap.~2, \S3]{Deligne-1970}, we then say that  $\omega_X$ 
has at most logarithmic poles, or that $\omega$
has at most logarithmic poles on~$X$, 
if both $\omega_X$ and $\mathrm d\omega_X$ have at most simple poles along~$D$. 

The following lemma implies that this condition 
is essentially independent of the choice of~$X$
such that the polar divisor of~$\omega_X$ has normal crossings.

\begin{lemm}\label{lemm.logarithmic-functoriality}
Let $g\colon X'\to X$ be a morphism of smooth $k$-varieties,
let $D$ be a divisor with normal crossings in~$X$ and
let $D'$ be a divisor with normal crossings in~$X'$
such that $D'=g^{-1}(D)$.
Let $\omega$ be a regular $m$-form on $X\setminus D$
and let $\omega'=g^*\omega$.

\begin{enumerate}
\item If $\omega$ has at most logarithmic poles along~$D$, 
then $\omega'$ has at most logarithmic poles along~$D'$.

\item The converse holds if $g$ is proper and surjective.
\end{enumerate}
\end{lemm}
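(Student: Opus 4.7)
The plan is to handle the two parts with complementary strategies. For part~(1), the statement is local on~$X'$. Around any point $x' \in X'$, I choose a regular system of parameters $(y_1,\dots,y_{n'})$ cutting out the local components of~$D'$, and coordinates $(x_1,\dots,x_n)$ near $g(x')$ cutting out~$D$. The hypothesis $D'=g^{-1}(D)$ forces each $g^*x_i$ to factor as a unit times a monomial in the $y_j$'s, so
\[
g^*(dx_i/x_i) = du_i/u_i + \sum_j a_{ij}\,dy_j/y_j
\]
is logarithmic along~$D'$. Since regular forms pull back to regular forms, any local logarithmic form on~$(X,D)$, written as a sum of terms $f \cdot \bigwedge d\log x_{i_k} \wedge \bigwedge dx_{j_l}$ with $f$ regular, pulls back to a logarithmic form on~$(X',D')$; applying the same reasoning to $d\omega$ finishes~(1).

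For part~(2), logarithmicity can be checked at the generic point~$\xi$ of each irreducible component $Z \subseteq D$. Because $g$ is proper and surjective and $D' = g^{-1}(D)$ as sets, every such~$Z$ is dominated by some component~$Z'$ of~$D'$; let $\xi'$ be its generic point. The morphism~$g$ then induces an inclusion of discrete valuation rings $\mathcal O_{X,\xi} \hookrightarrow \mathcal O_{X',\xi'}$ whose uniformizers $t,t'$ satisfy $g^*t = u\,{t'}^{e}$ for some unit~$u$ and ramification index $e \geq 1$. I extend $t$ to a regular system of parameters at~$\xi$ and decompose $\omega = (dt/t) \wedge \alpha + \beta$, with $\alpha,\beta$ not involving~$dt$; a direct check shows that $\omega$ is logarithmic at~$\xi$ precisely when $\alpha$ and~$\beta$ have coefficients regular at~$\xi$. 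Pulling back gives
\[
g^*\omega = e\,(dt'/t') \wedge g^*\alpha + \bigl[(du/u)\wedge g^*\alpha + g^*\beta\bigr],
\]
in which $du/u$ is regular at~$\xi'$. The hypothesis that $g^*\omega$ is logarithmic along~$D'$ then bounds the pole orders of $g^*\alpha$ and $g^*\beta$ at~$\xi'$.

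The hard part is transporting this bound back to $\alpha$ and~$\beta$ themselves. Expanding $g^*\alpha$ and $g^*\beta$ in a regular system of parameters at~$\xi'$ extending~$t'$, each coefficient becomes a linear combination of pullbacks of the coefficients of $\alpha,\beta$ weighted by Jacobian minors of the $g^*t_i$'s. The main obstacle is to rule out cancellations among these contributions: the decisive input is the injectivity of the residue field extension $k(Z) \hookrightarrow k(Z')$, which prevents a genuine pole of some coefficient at~$\xi$ from being masked after pullback. This forces all coefficients of $\alpha,\beta$ to be regular at~$\xi$, hence~$\omega$ is logarithmic along~$D$; applying the same argument to $d\omega$ (whose pullback is the logarithmic form $d(g^*\omega) = g^*(d\omega)$) completes part~(2).
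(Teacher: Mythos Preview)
Your argument for part~(1) is correct and amounts to the direct local verification underlying the result the paper cites from Deligne.

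For part~(2), however, there is a genuine gap precisely at the step you yourself flag as ``the hard part.'' The displayed identity
\[
g^*\omega \;=\; e\,(dt'/t')\wedge g^*\alpha \;+\; \bigl[(du/u)\wedge g^*\alpha + g^*\beta\bigr]
\]
is \emph{not} the decomposition of $g^*\omega$ into its $dt'/t'$-component and its $dt'$-free component: since the functions $g^*t_i$ (for $i\geq 2$) may depend nontrivially on~$t'$, the form $g^*\alpha$ can itself involve~$dt'$. Consequently, logarithmicity of $g^*\omega$ at~$\xi'$ does not directly bound the pole orders of $g^*\alpha$ and $g^*\beta$ separately, and the assertion ``then bounds the pole orders of $g^*\alpha$ and $g^*\beta$ at~$\xi'$'' is not yet justified. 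Your proposed remedy invokes the injectivity of $k(Z)\hookrightarrow k(Z')$, but that is a statement about functions; what is actually needed is injectivity of the induced map on \emph{differentials} along~$Z'$ restricted to the span of $dt_2,\dots,dt_n$, together with an argument that unmixes the $dt'$-contributions. None of this is supplied, so the sketch does not constitute a proof.

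The paper circumvents these difficulties by two preliminary reductions that you omit. First, it replaces~$X'$ by the Zariski closure of a point of~$X'\setminus D'$ algebraic over the generic point of~$X$ (and resolves), thereby reducing to $g$ generically finite; part~(1) guarantees this is harmless. Second, after deleting from~$X$ a closed subset of codimension~$\geq 2$ (legitimate because $\Omega^m_X(\log D)$ is locally free on the smooth variety~$X$), it works \'etale-locally at the generic point of~$D$, where the ramified cover admits the normal form $g^*z_1=(z'_1)^e$ and $g^*z_i=z'_i$ for $i\geq 2$. In these coordinates $g^*\alpha$ and $g^*\beta$ genuinely do not involve~$dz'_1$, the decomposition is clean, and a pole of order $d\geq 2$ on~$\omega$ forces poles of order $de\geq 2$ and $1+(d-1)e\geq 2$ on the two pieces of~$g^*\omega$, yielding the contradiction immediately. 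These two reductions are the missing ingredients in your approach.
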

\begin{proof}
The first assertion is \citep[chap.~II, prop.~3.2, (iv)]{Deligne-1970}.
Let us prove the second one. 

Consider the
generic point~$\eta$ of~$X$ and a point $\eta'\in X'\setminus D'$
which is algebraic over~$k(\eta)$.
The Zariski closure~$X'_1$ of~$\eta'$ 
is proper and generically finite over~$X$,
and $D'_1=D'\cap X'_1$ is a divisor.
There is a proper modification $h\colon X'_2\to X'_1$ 
such that $X'_2$ is smooth and $D'_2=h^{-1}(D'_1)$ has normal crossings.
By the first part, the form $h^*\omega'|_{X'_1}$ has at most
logarithmic poles along~$D'_2$. 
Replacing~$g$ by $g\circ h$, we may assume 
that $g$ is generically finite. 

Since the sheaf of forms with at most logarithmic poles along~$D$
is locally free and $X$ is smooth, we can 
delete from~$X$ a subset of codimension at least~2.
Thus, we may assume that $g$ is flat,
$D$ is smooth and irreducible,
and $g$ is étale outside of $D$. 
It suffices to argue étale locally at the generic point of~$D$.
By the local description of ramified morphisms,
there are étale local coordinates $(z_1,\dots,z_n)$
on~$X$ such that $D_\red=V(z_1)$, 
local coordinates $(z'_1,\dots,z'_n)$ on~$X'$
such that $g^*z_1=(z'_1)^e$, $g^*z_2=z'_2$, etc.,
where $e$ is the ramification index of~$g$ along~$D$.
Let $d$ be the order of the pole of~$\omega$ along~$D$;
write $\omega = \alpha/z_1^d + \beta \wedge \mathrm dz_1/z_1^d$,
where $\alpha,\beta$ are regular forms which do not involve~$\mathrm dz_1$.
Then 
\[ \omega'=g^*\omega=g^*\alpha/(z'_1)^{de} 
+ e\, g^*\beta \wedge \mathrm dz'_1/(z'_1)^{1+(d-1)e}. \]
Since $\omega'$ has at most logarithmic poles along~$D$,
we get $g^*\alpha$ is divisible by $(z'_1)^{de-1}$ and $g^*\beta$ is divisible by $(z'_1)^{(d-1)e}$.
Assume, by contradiction, that $d\geq 2$, so that $de-1\geq 1$ and $(d-1)e\geq 1$.
This implies that both $g^*\alpha$ and $g^*\beta$
are multiples of~$z'_1$, so that both $\alpha$ are $\beta$ are divisible by~$z_1$, contradicting the hypothesis
that $d$ was the order of the pole of~$\omega$ along~$D$.
Therefore, $d\leq 1$. This concludes the proof.
\end{proof}

\subsection{}
We say that an $m$-form $\omega\in\Omega^m_{K/k}$ is  \emph{logarithmic}
if for all proper smooth models~$X$ of~$K$
such that the polar divisor of~$\omega_X$ has normal crossings,
the meromorphic differential form $\omega_X$ 
has at most logarithmic poles.

By resolution of singularities \citep{Hironaka-1964},
two models are dominated by a third one, 
hence lemma~\ref{lemm.logarithmic-functoriality}
implies that it suffices that this condition 
is satisfied on some proper smooth model for which
the polar divisor of~$\omega_X$ has normal crossings.

Analogously, if $X$ is a reduced $k$-variety, then we 
say that a meromorphic $m$-form~$\omega$ on~$X$ is logarithmic “everywhere”
if for all proper birational models $(X',\omega')$ of~$(X,\omega)$
such that $X'$ is smooth and the polar divisor of~$\omega'$ has normal
crossings,
the meromorphic $m$-form~$\omega'$ on~$X'$ has at most logarithmic poles.
It suffices that this holds on one such model.

\section{Burnside rings for logarithmic forms}

\subsection{Burnside rings}\label{ssec.burn}
Let $k$ be a field of characteristic zero and $n$
an integer such that $n\geq0$.
\citet{KontsevichTschinkel-2019}
defined the Burnside group $\burn_n(k)$ as the free abelian group
on isomorphism classes of 
finitely generated extensions of~$k$
of transcendence degree~$n$.

Any integral $k$-variety~$X$ of dimension~$n$
has a class $[X]$ in $\burn_n(k)$.
This gives rise to alternative useful presentations of~$\burn_n(k)$,
for example involving only classes of integral projective smooth varieties.

The group
\[
\burn(k)=\bigoplus_{n\ge 0}  \burn_n(k)
\]
carries a natural commutative ring structure, 
with multiplication defined by taking products of (smooth projective) $k$-varieties:
\[
[X]\cdot [X'] = [X\times X'].
\]

\subsection{Definition of a Burnside group for volume forms}
Let $k$ be a field of characteristic zero and let $n$ be an integer
$\geq0$.
We define $\Burn_n(k)$ to be the free abelian group
on isomorphisms classes of pairs $(K,\omega)$,
where 
\begin{itemize}
\item
$K$ is a finitely generated extension of $k$ of transcendence degree~$n$ and 
\item
$\omega\in\Omega^n_{K/k}$ is a logarithmic volume form.
\end{itemize}
We write 
\[
[K,\omega] \in \Burn_n(k)
\]
for the class of a pair $(K,\omega)$.

\begin{rema}
This definition has obvious more geometric formulations.
For example, we can take for generators equivalence classes
of pairs $(X,\omega)$, where 
\begin{itemize}
\item
$X$ is a smooth integral $k$-scheme of dimension~$n$,
and 
\item
$\omega$ a regular volume form on~$X$ which is logarithmic 
“everywhere”,
\end{itemize}
modulo the smallest equivalence relation
that identifies $(X,\omega)$ and $(X',\omega')$
if there exists an open immersion $f\colon X'\to X$
such that $\omega'=f^*\omega$. 

Alternatively, we can assume that $X$ is proper, smooth and integral, 
the form $\omega$ is a logarithmic volume form on~$X$,
and consider the smallest equivalence relation
that identifies $(X,\omega)$ and $(X',\omega')$
if there exists a proper birational morphism $f\colon X'\to X$
such that $\omega'=f^*\omega$.
By the weak factorization theorem of~\citep{AbramovichKaruMatsukiEtAl-2002},
this equivalence relation is generated by such morphisms~$f$
which are blowing-ups along smooth centers in good
position with respect to the polar divisor of~$X$.

In both contexts, if $X$ is an $n$-dimensional $k$-variety
and $\omega$ is a meromorphic $n$-form on~$X$ 
which is logarithmic “everywhere”,
then we
define $[X,\omega]$ to be the sum, over all irreducible components~$Y$
of~$X$ which have dimension~$n$,
of the classes $[Y,\omega|_Y]$.
\end{rema}

\begin{exem}\label{exem.eps}
Finitely generated extensions of~$k$ of transcendence degree~$0$
are finite extensions of~$k$. Let $K$ be such an extension.
Since $k$ has characteristic zero, 
one has $\Omega^1_{K/k}=0$. However, $\Omega^0_{K/k}$,
which is its $0$th exterior power, is canonically isomorphic to~$K$.
Consequently, 
$ \Burn_0(k) $ is the free  abelian group on isomorphism
classes of pairs $(K,\lambda)$, where $K$ is a finite extension of~$k$
and $\lambda\in K$.

We will let $\mathbf 1=[\Spec(k),1]$
and $\eps = [\Spec(k),-1]$.
\end{exem}

\begin{exem}\label{exem.T}
Let $K=k(t)$. The differential form $\mathrm dt/t$ 
is a logarithmic volume form; indeed $X=\P^1_k$ is a model
of~$K$ and this form has poles of order~$1$ at~$0$ and~$\infty$,
and no other poles. We write $\mathbf T$ for the class of
$(k(t),\mathrm dt/t)$. 

Note that the $k$-isomorphism of $K$ that maps~$t$ to~$1/t$
maps $\mathrm dt/t$ to its opposite;  consequently,
we also have $\mathbf T=[k(t),-\mathrm dt/t]=\eps\cdot\mathbf T$.

In the context of birational
geometry in presence of logarithmic volume forms,
“rational varieties” would have class in~$\mathbf T^n$,
and similarly for stable birationality.
\end{exem}

\subsection{Multiplicative structure}\label{ss.commut}
We view the direct sum 
\[ \Burn(k)=\bigoplus_{n\in\N}\Burn_n(k) \]
as a graded abelian group.
It is endowed with a multiplication such that
\[ [X,\omega] \cdot [X',\omega'] =  [X\times X', \omega\wedge \omega'] \]
when $X,X'$ are proper, smooth and integral
and $\omega$, resp.~$\omega'$ are logarithmic volume forms on~$X$, resp.~$X'$,
and $Y$ ranges over the set of irreducible components of $X\times X'$. 

Let $s\colon X'\times X\to X\times X'$ be the isomorphism
exchanging the two factors. One has
\[ s^*(\omega \wedge \omega') = (-1)^{nn'} \omega'\wedge\omega,  \]
if $n=\dim(X)$, $n'=\dim(X')$, $\omega$ is a volume form on~$X$ 
and $\omega'$ is a  volume form on~$X'$.
Consequently,
\[ a \cdot b = \eps^{nn'} \cdot b \cdot a \]
for $a\in\Burn_n(k)$ and $b\in \Burn_{n'}(k)$.
In particular, classes in $\Burn_n(k)$, for even~$n$, are central in~$\Burn(k)$.

We remark that 
the element~$\mathbf T\in\Burn_1(k)$ is central as well.
Let indeed $a\in\Burn_n(k)$. If $n$ is even, then $a\cdot\mathbf T=\mathbf T\cdot a$.
Otherwise, we have $a\cdot \mathbf T=\eps \cdot \mathbf T\cdot a$,
but we have seen in example~\ref{exem.T}
that 
$\mathbf T=\eps\cdot\mathbf T$. 
As a consequence, $a\cdot \mathbf T=\mathbf T\cdot a$.

However, the ring~$\Burn(k)$ is \emph{not} commutative.
Indeed, consider curves $X$, $X'$ 
without automorphisms
and no nonconstant morphism between them.
Then the switch is the only isomorphism
from $X'\times X$ to~$X\times X'$. 
Take nonzero logarithmic $1$-forms $\omega,\omega'$ on~$X,X'$ respectively. 
The classes $[X\times X',\omega\wedge\omega']$
and $[X'\times X,\omega'\wedge\omega]$ are then distinct.

\subsection{Functoriality}
Let $k'$ be an extension of~$k$. Then there is a natural
ring homomorphism 
\[ \Burn(k)\to\Burn(k') \]
described as follows.
Let $(X,\omega)$ be an integral $k$-variety
of dimension~$n$ equiped with a logarithmic $q$-form. 
Let $X'=X\otimes_k k'$
be its base change to~$k'$, and let $\omega'$
be the volume form on~$X'$ deduced from~$\omega$
by base change. 
Then the class of $(X,\omega)$ maps to 
the sum of classes $(Y,\omega'|_Y)$, where $Y$ runs
the (finite) set of irreducible components of~$X'$.

% If $k'$ is a finite extension of~$k$,
% we also have a trace map
% \[ \Tr_{k'/k} \colon \Burn(k')\to \Burn(k) \]
% obtained by averaging over a set of representatives
% of automorphisms of the Galois closure of~$k'$ over~$k$ modulo 
% those preserving~$k'$.

\subsection{Relation with the classical Burnside group}
Forgetting the form~$\omega$ gives a ring morphism
\[ \pi\colon \Burn(k) \to \burn(k). \]
On the other hand, if $K$ is a finitely generated
extension of~$k$ of transcendence degree~$n$, 
we can endow it with the zero $n$-form.
The resulting map
\[ \varpi\colon \burn(k) \to \Burn(k) \]
identifies $\burn(k)$ with an ideal of $\Burn(k)$.
One has $\pi\circ\varpi=\id$.

\subsection{Variations on the theme}
The construction of~$\Burn(k)$ admits
several natural variants that are relevant in more specific contexts.
Some of them will be used in later sections.

\subsubsection{A relative ring}
Let $n$ be an integer.
For any $k$-scheme~$S$, we define $\Burn_n(S/k)$
as the free abelian group on triples $(X,\omega,u)$
where $X$ is an integral smooth $n$-dimensional $k$-scheme,
$\omega\in\Omega^n_{X/k}$ is a regular volume form
which is logarithmic “everywhere”,
and $u\colon X\to S$ is a morphism, 
modulo the smallest equivalence relation 
that identifies $(X,\omega,u)$ and $(X',\omega',u')$
if there exists an open immersion $f\colon X'\to X$
such that $\omega'=f^*\omega$ and $u'=u\circ f$.

Let $h\colon S\to T$ be a morphism of $k$-schemes.
It induces a morphism of abelian groups 
\[
h_*\colon \Burn_n(S/k)\to \Burn_n(T/k)
\]
such that $h_*([X,\omega,u])=[X,\omega,h\circ u]$
for any triple $(X,\omega,u)$ as above.

\subsubsection{Pluriforms}
One can replace volume forms with volume $r$-pluriforms,
that is, elements of~$(\Omega^n_{K/k})^{\otimes r}$,
for some given integer~$r$. 
The corresponding logarithmic condition requires
that the pluriform has poles of order at most~$r$ on 
an adequate model.
Note that when $r$ is even, the obtained ring is commmutative.

\subsubsection{Forms up to scalars}
In the construction, we may wish to identify
$(K,\omega)$ and $(K',\omega')$
if there exists $\lambda\in k^\times$, resp. $\lambda \in\{\pm1\}$,
and a $k$-isomorphism $f\colon K\to K'$ such that $f^*\omega'=\lambda\omega$.
These variants also give rise to a commutative ring.

\subsubsection{Group actions}\label{sss.group}
Let $G$ be a profinite group scheme over~$k$.
One can also consider pairs $(K,\omega)$,
where the field~$K$ is endowed with an action of~$G$
leaving the form~$\omega$ invariant.
The obtained ring will be denoted by~
$ \Burn^G(k)$.

\section{Residues}\label{sec.residues}

\subsection{Residue of a volume form}
Let $X$ be an equidimensional smooth $k$-variety of dimension~$n$.

Let $D$ be a smooth divisor on~$X$.
We denote by $\Omega^m_{X/k}(\log D)$
the sheaf of $m$-forms on~$X$ with logarithmic poles along~$D$, 
locally of the form $\eta \wedge \mathrm d\log f + \eta' $,
where $\eta$ and $\eta'$ are regular and $f$ is a local equation of~$D$.
The residue map is the homomorphism  of $\mathscr O_X$-modules
\[
\rho_D\colon \Omega^{m}_{X/k}(\log D)\to \Omega^{m-1}_{D/k},
\]
characterized by the relation
\[
\rho_D(\eta \wedge \mathrm d \log f + \eta' )
=\eta|_D
\] 
for every local sections $\eta\in\Omega^{m-1}_{X/k}$
and $\eta'\in\Omega^m_{X/k}$, 
and any local generator~$f$ of the ideal of~$D$.

If $\omega$ is a logarithmic $m$-form on~$X$,
there is an open subset~$U$ of~$X$ such that $U\cap D\neq\emptyset$
and such that $\omega|_U$ belongs to $\Omega^m_{X/k}(\log D)$.
Its residue $\rho_D(\omega|_U)$ is then
a meromorphic section of~$\Omega^{m-1}_{D/k}$.

\begin{lemm}\label{lemm.residue-log}
Let $\omega$ be a logarithmic differential form of degree~$m$ on~$X$.
Then $\rho_D(\omega)$ is a logarithmic $(m-1)$-form on~$D$.
\end{lemm}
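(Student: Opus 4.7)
The plan is to reduce to an explicit local calculation on a well-chosen birational model. By Hironaka's theorem applied to the pair $(X, D \cup E)$, where $E$ denotes the polar divisor of $\omega$, one can pass to a proper smooth model $X'$ birational to $X$ on which the strict transform $D'$ of $D$ is smooth and the divisor $D' \cup E'$ (with $E'$ the polar divisor of $\omega$ on $X'$) has strict normal crossings. Since "logarithmic" is a birational notion both on $X$ and on $D$, and since $D'$ is a proper smooth model of~$D$, it is enough to show that the residue $\rho_{D'}(\omega)$ has at most logarithmic poles on $D'$ with respect to the SNC divisor cut out on $D'$ by the other components of $E'$.

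Next, I would carry out the local computation. Fix a point $x \in D'$ and pick étale coordinates $z_1,\dots,z_n$ at $x$ such that $D' = V(z_1)$ and such that the components of $E'$ through $x$ other than possibly $D'$ are cut out by a subset of $\{z_2,\dots,z_n\}$, say $\{z_2,\dots,z_r\}$. Since $\omega$ is logarithmic, near $x$ it lies in $\Omega^{m}_{X'/k}(\log(D' \cup E'))$, and one writes uniquely
\[
\omega = \frac{\mathrm d z_1}{z_1} \wedge \alpha + \beta,
\]
where neither $\alpha$ nor $\beta$ involves $\mathrm d z_1$, and both belong to the sheaf of forms with at most logarithmic poles along $V(z_2),\dots,V(z_r)$. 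By the defining property of $\rho_{D'}$, we have $\rho_{D'}(\omega) = \alpha|_{D'}$; since $\alpha$ is already logarithmic in $z_2,\dots,z_r$, its restriction to $D'$ is logarithmic along the trace of $E'$ on $D'$, which is SNC in $D'$ by construction.

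Finally, the condition that $\mathrm d\rho_{D'}(\omega)$ has only simple poles (the other half of Deligne's definition) follows immediately from the local expression for $\alpha|_{D'}$, and can be seen conceptually from the identity $\mathrm d \circ \rho_{D'} = -\rho_{D'} \circ \mathrm d$ together with the hypothesis that $\mathrm d \omega$ is itself logarithmic. The main obstacle is the first step: producing the model in which the given divisor $D$ and the polar divisor of $\omega$ are simultaneously part of an SNC configuration. This depends on resolution of singularities applied to the pair $(X, D \cup E)$, together with the birational invariance established in lemma~\ref{lemm.logarithmic-functoriality}, which guarantees that passing from $X$ to $X'$ does not change the logarithmic character of $\omega$ and that passing from $D$ to $D'$ does not change the logarithmic character of the residue. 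Once these reductions are in place, the content of the lemma is the short, transparent local calculation above.
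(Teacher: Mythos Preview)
Your proof is correct and follows the same strategy as the paper's own argument, which reads in full: ``We may assume that the sum of~$D$ and of the polar divisor of~$\omega$ has strict normal crossings. The assertion is then evident in local coordinates.'' You have simply unpacked both steps --- the reduction via resolution and lemma~\ref{lemm.logarithmic-functoriality}, and the local computation --- in more detail than the paper chose to.
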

\begin{proof}
We may assume that the sum of~$D$ and of the polar divisor of~$\omega$
has strict normal crossings. The assertion 
is then evident in  local  coordinates.
\end{proof}

\subsection{Blowing-ups and normal bundles}
 
Let $Y$ be a smooth closed subscheme of~$X$.
The blow-up $\Bl_Y(X)$ of~$X$ along~$Y$ is a smooth $k$-variety.
The blowing-up morphism $b_Y\colon \Bl_Y(X)\to X$ is an isomorphism
over the complement of~$Y$.
If $Y$ is nowhere dense and nonempty,
then $E_Y=b_Y^{-1}(Y)$ is a smooth divisor in~$\Bl_Y(X)$. 

In general, $E_Y=b_Y^{-1}(Y)$ identifies, as a $Y$-scheme, with
the projectivization of the normal bundle $\mathscr N_Y(X)$
of~$Y$ in~$X$.  We write $j_Y\colon E_Y\to X$ for the natural
projection of~$E_Y$ to~$X$.

Let $W$ be a closed smooth subscheme of~$X$. Assume that $W$ and~$Y$ 
are transversal. Then the Zariski closure of $b_Y^{-1}(W\setminus (Y\cap W))$
is called the strict transform of~$W$ in~$\Bl_Y(X)$.
It identifies with $\Bl_{Y\cap W}(W)$.

Let $\omega$ be a logarithmic $m$-form on~$X$.
Then the form $b_Y^*\omega$ on~$\Bl_Y(X)$ is logarithmic;
assuming that $Y$ is nonempty and nowhere dense, we can consider
the residue $\rho_Y(\omega)$
of~$b_Y^*\omega$ along~$E_Y$. 
It is a logarithmic $(n-1)$-form on $E_Y$.
% E_Y = \P(\mathscr N_Y(\mathscr X))$.

\begin{defi}\label{defi.rho}
Let $X$ be an irreducible proper smooth $k$-variety of dimension $n$ and
$\omega\in\Omega^n_{X/k}$ a  logarithmic volume form
whose polar divisor~$D$ has strict normal crossings.
Let $(D_\alpha)_{\alpha\in\mathscr A}$ be the family of its
irreducible components; for $A\subset\mathscr A$,
we let $D_A=\bigcap_{\alpha\in A}D_\alpha$.
We define an element $\rho(X,\omega)$ in $\Burn_{n-1}(X/k)$
by the formula:
\[ \rho(X,\omega) = \sum_{\emptyset\neq A\subset \mathscr A}
 (-1)^{\Card(A)-1}  [E_{D_A}, \rho_{D_A}(\omega), j_{D_A}] .\]
\end{defi}
In this formula and all similar ones below, it is always
implicit that the terms where $D_A=\emptyset$ are omitted.
We will also set
\[ \rho_{D_A}(X,\omega) = [E_{D_A}, \rho_{D_A}(\omega), j_{D_A}].\]

\subsection{Iterated residues}\label{ss.iter}
We retain the notation of definition~\ref{defi.rho}

Fix 
a logarithmic volume form~$\omega$ on~$X$
and 
a nonempty subset~$A$ of~$\mathscr A$
such that $D_A\neq\emptyset$.
It will be useful to compute  inductively
the logarithmic volume form $\rho_{D_A}(\omega)$
that appears in definition~\ref{defi.rho}.

Let $b_A\colon \tilde X\to X$ be the blowing-up of~$X$
along~$D_A$ and let $E$ be its exceptional divisor.

When $A=\{\alpha\}$ has a single element, 
$D_A$ is the divisor~$D_\alpha$,
the blowing-up morphism~$b_A$ is an isomorphism,
and the exceptional divisor identifies with~$D_A$.
Then 
\[ \rho(X,\omega)= \rho_{D_\alpha}(X,\omega)
 = [D_\alpha,\rho_{D_\alpha}(\omega),j_{D_\alpha}], \]
where  $j_{D_\alpha}$ is the immersion of~$D_\alpha$ into~$X$.

This construction can be pursued in higher codimension,
using iterated residues.
Fix a total order on~$\mathscr A$.
There is a unique, strictly increasing sequence
$(\alpha_1,\dots,\alpha_m)$ in~$\mathscr A$ such that 
$A=\{\alpha_1,\dots,\alpha_m\}$.
Given the chosen order on~$\mathscr A$,
we may apply the iterated residues construction 
and obtain a logarithmic form of degree~$n-m$
\[ \rho_{D_A} (\omega) 
= \rho_{D_{\alpha_1}} \circ \dots \circ \rho_{D_{\alpha_m}}(\omega). \]
On a nonempty open subset~$U$ of~$X$ that meets~$D_A$, we may write
\[ \omega=\eta \wedge \dlog(f_{\alpha_1})\wedge \dots
\dlog (f_{\alpha_m}), \]
for a regular form~$\eta$, 
and then one has $\rho_{D_A}(\omega)=\eta|_{U\cap D_A}$.

Denote by~$b_A$ the blowing-up of~$X$ along~$D_A$
and by~$E_A$ its exceptional divisor;
recall that $E_A$ identifies with the projectivized normal
bundle~$\mathscr N_{D_A}(X)$ of~$D_A$ in~$X$.
Using local equations for the divisors~$D_\alpha$, for $\alpha\in A$,
we trivialize $\mathscr N_{D_A}(X)$ on a dense open subscheme of~$D_A$;
this gives a birational isomorphism of~$E_A$ with~$D_A \times \P^{m-1}$
(with $m=\Card(A)$), and a local computation gives the formula
\[ [E_A, \rho_{E_A}(\omega) ] = 
[D_A,\rho_{D_A}(\omega)]\cdot \mathbf T^{m-1} \]
 in $\Burn_{n-1}(D_A/k)$.
In particular,
\[ \rho_{D_A}(X,\omega) = [D_A,\rho_{D_A}(\omega)] \cdot\mathbf T^{m-1}
\]
in $\Burn_{n-1}(X/k)$.

When $m\geq 2$, 
the definition of~$\rho_{D_A}$ actually depends 
on the chosen order of~$\mathscr A$, but only 
up to a sign, so that  the class $[D_A,\rho_{D_A}(\omega)]$
is well defined up to multiplication
by the class~$\eps\in\Burn_0(k)$. 
On the other hand,
it is multiplied by $\mathbf T^{m-1}$
and we have observed that $\eps \cdot \mathbf T=\mathbf T$.

\begin{prop}\label{prop.good-blow-up}
Let $(X,\omega)$, $D$, and $(D_\alpha)_{\alpha\in \mathscr A}$
be as in definition~\ref{defi.rho}.
Let $Y$ be a strict irreducible smooth subvariety of~$X$
such that $\codim_X(Y)\geq2$.
Let $\mathscr A_Y$ be the set of all $\alpha\in \mathscr A$
such that $Y\not\subset D_\alpha$;
we assume that $\sum_{\alpha\in \mathscr A_Y} D_\alpha$
meets~$Y$ transversally.

Let $g\colon X'\to X$ be the blowing-up of~$X$ along~$Y$
and let $\omega'=g^*\omega$; it is a logarithmic
form, its polar divisor
has strict normal crossings, and we have 
\[ g_* \rho (X',\omega') = \rho(X,\omega) \]
in $\Burn(X/k)$.
\end{prop}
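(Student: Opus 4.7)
The plan is to work locally around points of $Y$, verify that $\omega'$ is logarithmic with strict normal crossings polar divisor, and then show that the classes $g_*\rho(X',\omega')$ and $\rho(X,\omega)$ agree above~$X\setminus Y$ and also agree on their $Y$-supported parts.

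First, combining the transversality hypothesis with the smoothness of $Y$ and the strict normal crossings of $D$, I would show that near each point $p\in Y$ one can choose local parameters $u_1,\dots,u_n$ on $X$ such that $Y=V(u_1,\dots,u_c)$, with $c=\operatorname{codim}(Y,X)$, every $D_\alpha$ through $p$ is of the form $V(u_{k(\alpha)})$, and $k(\alpha)\le c$ exactly when $\alpha\in\mathscr A\setminus\mathscr A_Y$. Setting $m=\Card(\mathscr A\setminus\mathscr A_Y)$, a pullback computation in the $c$ standard charts of the blow-up then shows that $\omega'$ is logarithmic with strict normal crossings polar divisor, whose components are the strict transforms $\tilde D_\alpha$ for $\alpha\in\mathscr A$ together with the exceptional divisor~$E$, the latter appearing if and only if $m=c$ (equivalently, when $Y$ is an irreducible component of $D_{\mathscr A\setminus\mathscr A_Y}$); otherwise $\omega'$ vanishes along~$E$ to order $c-m-1\ge 0$.

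Since $g$ restricts to an isomorphism $X'\setminus E \to X\setminus Y$ under which $\omega'$ corresponds to $\omega$, the restrictions of $g_*\rho(X',\omega')$ and $\rho(X,\omega)$ above $X\setminus Y$ coincide; it therefore suffices to check equality of their $Y$-supported parts in $\Burn_{n-1}(X/k)$. I would write $\rho(X',\omega')=\Sigma_1+\Sigma_2$ with
\[
\Sigma_1 = \sum_{\emptyset\neq A\subset\mathscr A}(-1)^{\Card(A)-1}\rho_{\tilde D_A}(X',\omega'), \quad
\Sigma_2 = \sum_{A\subset\mathscr A}(-1)^{\Card(A)}\rho_{\tilde D_A\cap E}(X',\omega'),
\]
the sum $\Sigma_2$ being zero unless $E$ is polar. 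Since $\Sigma_2$ is the $E$-supported part of $\rho(X',\omega')$, its pushforward $g_*\Sigma_2$ is the $Y$-supported part of $g_*\rho(X',\omega')$. On the other side, a local analysis using the coordinates above shows that the $Y$-supported part of $\rho(X,\omega)$ is nonzero only when $m=c$, in which case it equals $\sum_{A\supset\mathscr A\setminus\mathscr A_Y}(-1)^{\Card(A)-1}\rho_{C_A}(X,\omega)$, where $C_A$ denotes the union of components of~$D_A$ contained in $Y$; locally $C_A=Y\cap D_{A\cap\mathscr A_Y}$.

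The main obstacle is to verify that $g_*\Sigma_2$ equals this $Y$-supported sum. For $A=A_1\sqcup A_2$ with $A_1\subset\mathscr A\setminus\mathscr A_Y$ and $A_2\subset\mathscr A_Y$, the intersection~$\tilde D_A\cap E$ is nonempty if and only if $A_1\subsetneq\mathscr A\setminus\mathscr A_Y$ and $Y\cap D_{A_2}\neq\emptyset$, and it is birational to $(Y\cap D_{A_2})\times\P^{c-\Card(A_1)-1}$. The iterated residue of $\omega'$ along $\tilde D_\alpha$ for $\alpha\in A$ and along~$E$ factors in this trivialization as the iterated residue of $\omega$ along~$D_\alpha$ for $\alpha\in(\mathscr A\setminus\mathscr A_Y)\cup A_2$ wedged with the standard logarithmic form on the projective factor. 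This yields $g_*\rho_{\tilde D_A\cap E}(X',\omega')=\rho_{C_{(\mathscr A\setminus\mathscr A_Y)\cup A_2}}(X,\omega)$, a class independent of~$A_1$. Summing over $A_1\subsetneq\mathscr A\setminus\mathscr A_Y$ yields the identity $\sum_{A_1}(-1)^{\Card(A_1)}=-(-1)^c$, and a short sign computation then rewrites $g_*\Sigma_2$ exactly as the $Y$-supported part of $\rho(X,\omega)$, finishing the proof.
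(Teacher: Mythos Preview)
Your proof is correct and follows essentially the same approach as the paper's. The paper organizes the argument as a dichotomy on whether $\dim(Y)<\dim(D_B)$ or $\dim(Y)=\dim(D_B)$ (your cases $m<c$ and $m=c$), and then matches terms directly rather than via your ``$Y$-supported versus not'' decomposition, but the underlying residue identifications $g_*\rho_{\tilde D_A\cap E}(X',\omega')=\rho_{D_{A\cup B}}(X,\omega)$ and the combinatorial identity $\sum_{A_1\subsetneq B}(-1)^{\Card(A_1)}=-(-1)^{\Card(B)}$ are the same in both arguments.
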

\begin{proof}
Let $E=g^{-1}(Y)$ be the exceptional divisor;
for each $\alpha\in\mathscr A$, let $D'_\alpha$
be the strict transform of~$D_\alpha$.
The blow-up~$X'$
is smooth;
the  divisor $E+\sum_{\alpha\in\mathscr A}D'_\alpha$
has strict normal crossings and
contains the polar divisor of~$\omega'$.

Let $B$ be the set of all~$\beta\in\mathscr A$ such that $Y\subset D_\beta$,
so that $D_B$ is the minimal stratum containing~$Y$.

We split the discussion into two cases.

\begin{enumerate}
\item
\emph{Assume that $\dim(Y)< \dim(D_B)$.} 
Since $g$ is ramified along~$E$, its Jacobian vanishes along~$E$.
Since $\omega$ has poles of order at most~one, 
the form $\omega'=g^*\omega$ is regular at the generic point of~$E$,
hence the polar divisor of~$\omega'$ does not contain~$E$.
Since $g$ is a local isomorphism around
the generic points of~$D_\alpha$, for $\alpha\in\mathscr A$,
we thus see that the polar divisor of~$\omega'$ 
is equal to $\sum_{\alpha\in{\mathscr A}}D'_\alpha$.
Consequently, we have to compare
\[ \sum_{\emptyset\neq A\subset \mathscr A} (-1)^{\Card(A)-1}
  \rho_{D'_A} (X',\omega') 
\]
with
\[ \sum_{\emptyset\neq A\subset \mathscr A} (-1)^{\Card(A)-1}
  \rho_{D_A} (X,\omega) .\]

For every nonempty subset~$A$ of~$\mathscr A$,
one has
\[ g_* \rho_{D'_A}(X',\omega')  = \rho_{D_A}(X,\omega) \]
for every nonempty subset~$A$ of~$\mathscr A$,
which implies the desired formula in this case.

\item
\emph{Assume that $\dim(Y)=\dim(D_B)$.}
In this case, $Y$ is an irreducible component of~$D_B$. 
Since $D_{\emptyset}=X$ and $Y\neq X$, we have $B\neq\emptyset$.
We have to compare the expression
\[ \sum_{\emptyset\neq A\subset \mathscr A} (-1)^{\Card(A)-1}
  \rho_{D'_A} (X',\omega') 
+ \sum_{A\subset\mathscr A} (-1)^{\Card(A)}  \rho_{E\cap D'_A}(X',\omega') \]
with
\[ \sum_{\emptyset\neq A\subset \mathscr A} (-1)^{\Card(A)-1}
  \rho_{D_A} (X,\omega) .\]
The argument takes place in a neighborhood of~$Y$, 
which allows to assume that $Y=D_B$.

Let $A$ be a nonempty subset of~$\mathscr A$.
One has $D'_A=\emptyset$ whenever $B\subset A$, and the corresponding
terms are absent from the second expression.
On the other hand, if $B\not\subset A$, 
the morphism~$g$  identifies~$D'_A$ with the blow-up
of~$D_A$ along~$D_{A}\cap Y=D_{A\cup B}$. 
In particular, $g$ induces a birational isomorphism from~$D'_A$ to~$D_A$,
so that $g_* \rho_{D'_A}(X',\omega')=\rho_{D_A}(X,\omega)$.
Moreover, $E\cap D'_A$ is the projectivized
normal bundle $\mathbf P\mathscr N_{D_{A\cup B}}(D_A)$,
and 
\[ g_* \rho_{E\cap D'_A} (X',\omega')= \rho_{D_{A\cup B}}(X,\omega). \]
Similarly, one has 
\[ g_* \rho_E(X',\omega') = \rho_{D_B}(X,\omega). \]
This gives a formula of the form 
\begin{align*}
 g_*\rho(X',\omega') & = \sum_{\substack{\emptyset \neq A \subset \mathscr A \\ B \not\subset A}}  (-1)^{\Card(A)-1} \rho_{D_A}(X,\omega)
+ \sum_{\substack{ A \subset \mathscr A \\ B \not\subset A}} (-1)^{\Card(A)} \rho_{D_{A\cup B}}(X,\omega)  \\
& = \sum_{\emptyset \neq A \subset\mathscr A} n'_A \rho_{D_A}(X,\omega), 
\end{align*}
where
\[
 n'_A = \begin{cases} (-1)^{\Card(A)-1} & \text{if $B\not\subset A$,} \\
 \sum_{\substack{ C \subset \mathscr A \\ B \not\subset C 
 \\ C \cup B = A }} (-1)^{\Card(C)} & \text{if $B\subset A$.} \end{cases} \]
It suffices to prove that $n'_A=(-1)^{\Card(A)-1}$ 
for any nonempty subset~$A$ of~$\mathscr A$.
This is obvious when $B\not\subset A$, so let us assume that $B\subset A$.
In the sum that defines~$n'_A$,  we write $C=(C\setminus B)\cup C'$,
where $C'=C\cap B$ is a subset of~$B$;
the condition $C\cup B=A$ means $C\setminus B=A\setminus B$;
the condition $B\not\subset C$ means $C'\neq B$.
Consequently, we have
\begin{align*}
 n'_A & = (-1)^{\Card(A\setminus B)}
\sum_{\substack{C' \subset B \\ C' \neq B}} (-1)^{\Card(C')} \\
& = (-1)^{\Card(A\setminus B)}
\left( \sum_{C' \subset B } (-1)^{\Card(C')} 
- (-1)^{\Card(B)}\right) \\
& = (-1)^{\Card(A\setminus B)}
\left( (1-1)^{\Card(B)} - (-1)^{\Card(B)}\right) \\
& = (-1)^{\Card(A)-1}, 
\end{align*}
since $\Card(B)\geq1$.
This concludes the proof of the proposition.
\qedhere
\end{enumerate}
\end{proof}

\begin{theo}\label{theo.residue-bir}
Let $(X,\omega)$ be as in definition~\ref{defi.rho}.
If $X$ is proper, then the image of $\rho(X,\omega)$
in~$\Burn_{n-1}(k)$ only depends on the class $[X,\omega]\in
\Burn_n(k)$.
It gives rise to a morphism of abelian groups
\[ \partial_n\colon \Burn_n(k)\to \Burn_{n-1}(k). \]
\end{theo}
\begin{proof}
By the definition of $\Burn_n(k)$ involving
pairs $(X,\omega)$ where $X$ is proper,
it suffices to consider two pairs $(X,\omega)$
and $(X',\omega')$ as in definition~\ref{defi.rho}
which are related by a proper birational morphism
$g\colon X'\to X$ such that $g^*\omega=\omega'$.
By the weak factorization theorem of~\cite{AbramovichKaruMatsukiEtAl-2002},
in order to prove the theorem,
we may assume that $g$ is a blowing-up
of~$X$ along a smooth subvariety which is transversal
to the polar divisor of~$\omega$. In
this case, proposition~\ref{prop.good-blow-up} 
asserts that $g_*\rho(X',\omega')=\rho(X,\omega)$
in $\Burn(X/k)$. In particular,
the images in~$\Burn(k)$
of $\rho(X',\omega')$ and $\rho(X,\omega)$
are equal.
\end{proof}

\begin{rema}
The assumption that the form is logarithmic is essential in this construction.
When extended to general meromophic $n$-forms,
the map~$\rho$ given in definition~\ref{defi.rho} 
is not a birational invariant.
This is already apparent in the case of surfaces.
\end{rema}

\begin{exem}\label{exem.residue-torus}
The meromorphic differential form $\mathrm dt/t$ on~$\P^1_k$
has residues~$1$ and~$-1$ at~$0$ and~$\infty$ respectively.
By construction, we thus have 
\[ \partial_1(\mathbf T) = [\Spec(k), 1] + [\Spec(k),-1]
 = \mathbf 1 + \eps . \]

Let $n$ be an integer such that $n\geq 2$ 
and let us compute $\partial_n(\mathbf T^n)$.
We view $\mathbf T^n$ as the class of~$\P^n$, with homogeneous
coordinates $[1:x_1:\dots:x_n]$, and with the toric differential
form
\[
\omega_n=(\mathrm dx_1/x_1)\wedge \dots (\mathrm dx_n/x_n).
\]
Its divisor is the sum of the toric hyperplanes $D_0,\dots,D_n$.
Each of these hyperplanes identifies with $\P^{n-1}$, 
and $\rho_{D_j}(\omega_n)$ is $(-1)^{n-j}\omega_{n-1}$.
Let $\mathscr A=\{0,\dots,n\}$.
If $A=\mathscr A$, then $D_A=\emptyset$.
Otherwise, we see by induction
that  $D_A$ is isomorphic to~$\P^{n-\Card(A)}$
and $\rho_{D_A}(\omega_n)$ identifies with $\pm \omega_{n-\Card(A)}$,
so that 
\[ [D_A,\rho_{D_A}(\omega_n)]\cdot \mathbf T^{\Card(A)-1} = 
[\gm^{n-1},\pm \omega_{n-1}] = \mathbf T^{n-1}, \]
since $n-1\geq 1$.
Then,
\begin{align*}
\partial_n(\mathbf T^n) 
& = \sum_{\emptyset \neq A\subset \mathscr A}
 (-1)^{\Card(A)-1} [D_A, \rho_{D_A}(\omega_n)] \cdot \mathbf T^{\Card(A)-1} \\
& = \sum_{\emptyset \neq A\subsetneq \mathscr A} (-1)^{\Card(A)-1} \mathbf T^{n-1}.
\end{align*}
Now,
% 0=(1-1)^(n+1)=sum_A (-1)^Card(A) = sum_(A neq 0) + 1 + (-1)^(n+1)
\[ \sum_{\emptyset \neq A\subsetneq\mathscr A} (-1)^{\Card(A)-1}
= 1 - (1-1)^{n+1}+ (-1)^{n+1} = 
\begin{cases} 
 2 & \text{if $n$ is odd;} \\
0 & \text{if $n$ is even}. 
\end{cases} \]
We get 
$ \partial_n(\mathbf T^n) = 2 \mathbf T^{n-1}$ 
if $n$ is odd and $\partial_n(\mathbf T^n)=0$ if $n$ is even.
(Recall that $n\geq 2$.)
Since $\mathbf T=\eps\cdot\mathbf T$, the following formula unifies 
the various cases: for $n\geq 1$, we have
\[ \partial_n(\mathbf T^n) = (1+(-1)^{n-1}\eps)\cdot \mathbf T^{n-1}. \]
\end{exem}

\begin{theo}\label{theo.partial-deriv}
Let $a\in\Burn_m(k)$ and $b\in\Burn_n(k)$;
we have
\[ \partial_{m+n}(a\cdot b)
= \eps^n\cdot   \partial_m(a) \cdot b + a \cdot \partial_n(b) - \mathbf T \cdot \partial_m(a)\cdot \partial_n(b) \]
in $\Burn_{m+n-1}(k)$.
\end{theo}
\begin{proof}
It suffices to treat the case where $a$ and $b$ are classes 
of proper integral smooth varieties $(X,\omega)$,
$(Y,\eta)$, endowed with meromorphic volume forms
whose polar divisors have strict normal crossings and no multiplicities.
Let $(D_\alpha)_{\alpha\in\mathscr A}$ be the irreducible
components of the polar divisor of~$\omega$,
let $(E_\beta)_{\beta\in\mathscr B}$ be the irreducible
components of the polar divisor of~$\eta$.
Then $[X,\omega]\cdot [Y,\eta]$
is the class of $[X\times Y, \omega\wedge\eta]$;
the polar divisor of~$\omega\wedge\eta$ is equal to
\[ \sum_{\alpha\in\mathscr A} D_\alpha \times Y + \sum_{\beta\in \mathscr B} X \times E_\beta. \]
We fix a total order on the disjoint union of~$\mathscr A$
and~$\mathscr B$ such that the elements of~$\mathscr A$ 
are smaller than those of~$\mathscr B$.
For any subsets $A,B$ of~$\mathscr A$ and~$\mathscr B$,
observe that we have
\[ \rho_{D_{A\cup B}} (\omega\wedge\eta) 
 = \pm \rho_{D_A}(\omega) \wedge \rho_{E_B}(\eta), \]
where $\rho_{D_A}$ has to be understood as
the identity when $A$ is empty, and similarly for~$\rho_{E_B}$.
The sign is~$1$ when $A=\emptyset$;
when $B=\emptyset$, it is equal to~$(-1)^{\Card(A)n}$;
we won't need to use its explicit value in the other cases.
Then we can write
$ \partial ([X,\omega]\cdot [Y,\eta]) $ as
\[
 \sum_{\substack{A\subset \mathscr A \\ B\subset \mathscr B \\
 A\cup B\neq\emptyset}}
(-1)^{\Card(A)+\Card(B)-1}
[ D_A\times E_B, \pm \rho_{A}(\omega)\wedge\rho_{E_B}(\eta)]
 \cdot \mathbf T^{\Card(A\cup B)-1} \]
and we split it into the sum of three terms,
according to which $B=\emptyset$, or $A=\emptyset$, 
or none of them is empty.
The first two terms are respectively equal to 
\[ \sum_{\emptyset\neq A\subset\mathscr A}
(-1)^{\Card(A)-1} [D_A\times Y,(-1)^{n\Card(A)}\rho_{D_A}(\omega)\wedge\eta]
\cdot \mathbf T^{\Card(A)-1}
= \partial([X,(-1)^n \omega])\cdot [Y,\eta] \]
and
\[ \sum_{\emptyset\neq B\subset\mathscr B}
(-1)^{\Card(B)-1} [X\times E_B, \omega\wedge \rho_{E_B}(\eta)]
\cdot \mathbf T^{\Card(B)-1}
= [X,\omega]\cdot \partial([Y,\eta]), \]
since $\mathbf T$ belongs to the center of~$\Burn(k)$.
As for the third one, 
we obtain
\[ - \sum_{\emptyset \neq B\subset \mathscr B} 
 (-1)^{\Card(B)-1} \sum_{\emptyset \neq A\subset \mathscr A}
 (-1)^{\Card(A)-1} [D_A, \rho_{D_A}(\omega)] \cdot [E_B,\rho_{E_B}(\eta)]
 \cdot \mathbf T^{\Card(A)+\Card(B)-2}  \]
which equals
\[ - \partial ([X,\omega]) \cdot \partial([Y,\eta])\cdot \mathbf T. \]
Finally, we get
\begin{align*}
 \partial_{m+n}(a\cdot b)  & = 
 \partial_{m+n}([X,\omega]\cdot [Y,\eta])  \\
& = \partial_m([X,(-1)^n\omega) \cdot [Y,\eta]
+ [X,\omega] \cdot \partial_n([Y,\eta]) \\
& \qquad{}  - \mathbf T \cdot \partial_m([X,\omega]) \cdot \partial_n([Y,\eta]) \\
& = \eps^n \cdot \partial_m(a) \cdot b + a \cdot \partial_n(b) 
  - \mathbf T \cdot \partial_m(a) \cdot \partial_n(b), 
\end{align*}
as claimed. 
\end{proof}
Using the computation of example~\ref{exem.residue-torus},
we obtain the following corollary.
% generalization of proposition~\ref{prop.residue-times-T}.
\begin{coro} 
For any $a\in\Burn_m(k)$ and any positive integer~$n$, we have
\[ \partial_{m+n}(a\cdot \mathbf T^n) 
= \begin{cases}
\partial_m(a) \cdot \mathbf T^n  & \text{if $n$ is even;} \\ 
-\partial_m(a) \cdot \mathbf T^n +  a \cdot\partial_n(\mathbf T^{n}) 
& \text{if $n$ is odd.} \end{cases}
\]
\end{coro}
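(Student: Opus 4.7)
The plan is to specialize theorem~\ref{theo.partial-deriv} to the case $b=\mathbf T^n\in\Burn_n(k)$ and then to substitute the explicit value of $\partial_n(\mathbf T^n)$ computed in example~\ref{exem.residue-torus}, namely $\partial_n(\mathbf T^n)=(1+(-1)^{n-1}\eps)\cdot\mathbf T^{n-1}$ for $n\geq 1$. The other ingredients are the properties already established for~$\eps$ and~$\mathbf T$: the class $\eps$ lies in $\Burn_0(k)$ and is therefore central; $\mathbf T$ is central; and $\eps\cdot\mathbf T=\mathbf T$, which implies $\eps\cdot\mathbf T^k=\mathbf T^k$ for every integer $k\geq 1$.

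With $b=\mathbf T^n$, theorem~\ref{theo.partial-deriv} reads
\[ \partial_{m+n}(a\cdot\mathbf T^n) = \eps^n\cdot\partial_m(a)\cdot\mathbf T^n + a\cdot\partial_n(\mathbf T^n) - \mathbf T\cdot\partial_m(a)\cdot\partial_n(\mathbf T^n), \]
and I would treat the two parities of~$n$ separately. When $n$ is even, the case $n=0$ is tautological, and for $n\geq 2$ one has $\eps^n=\mathbf 1$ and $\partial_n(\mathbf T^n)=0$, so only the first term survives and yields $\partial_m(a)\cdot\mathbf T^n$. When $n$ is odd, $\eps^n=\eps$, and centrality of~$\eps$ combined with $\eps\cdot\mathbf T^n=\mathbf T^n$ turns the first term into $\partial_m(a)\cdot\mathbf T^n$. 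Substituting $\partial_n(\mathbf T^n)=(1+\eps)\mathbf T^{n-1}$ in the third term and using centrality of~$\mathbf T$ make it equal to $\partial_m(a)\cdot(1+\eps)\mathbf T^n=2\partial_m(a)\cdot\mathbf T^n$. Subtracting this contribution from the first term gives $-\partial_m(a)\cdot\mathbf T^n$, and adding the middle term $a\cdot\partial_n(\mathbf T^n)$ produces the stated formula.

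No step presents a real obstacle: the corollary is a direct specialization of theorem~\ref{theo.partial-deriv}, followed by routine bookkeeping with the centrality relations. The only point meriting attention is the separation between $n=0$ and $n\geq 2$ in the even case, since the identity $\eps\cdot\mathbf T^k=\mathbf T^k$ is valid only for $k\geq 1$; in the odd case this identity is used only for $k=n$, which is automatically $\geq 1$.
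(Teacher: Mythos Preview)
Your proposal is correct and follows exactly the route indicated by the paper: specialize theorem~\ref{theo.partial-deriv} to $b=\mathbf T^n$ and plug in the value of $\partial_n(\mathbf T^n)$ from example~\ref{exem.residue-torus}, using the centrality of $\eps$ and $\mathbf T$ together with $\eps\cdot\mathbf T=\mathbf T$. The paper gives no further detail beyond citing these two ingredients, so your write-up is simply a faithful expansion of the intended argument.
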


\begin{rema}
For the variant of $\Burn(k)$ where we considering forms up to sign,
the formula of theorem~\ref{theo.partial-deriv}
simplifies to 
\[ \partial_{m+n}(a\cdot b)
=  \partial_m(a) \cdot b + a \cdot \partial_n(b) - \mathbf T \cdot \partial_m(a)\cdot \partial_n(b). \]
\end{rema}

\section{A complex of Burnside rings}

\begin{theo}\label{theo.dd=0}
For any integer $n\geq 2$, we have
\[ \partial_{n-1} \circ \partial_n = 0. \]
\end{theo}
In other words,
the residue morphisms of Burnside groups give rise to a complex
\[ \dots \to \Burn_n(k) \to \Burn_{n-1}(k) \to \dots \to \Burn_1(k) \to\Burn_0(k) \]

\begin{proof}
It suffices to prove the following result:
Let $(X,\omega)$ be an integral proper smooth variety 
of dimension~$n$
equipped with a meromorphic volume form~$\omega$
whose polar divisor has strict normal crossings and 
no multiplicities;
then $\partial_{n-1}(\partial_n ([X,\omega]))=0$.

Let $(D_\alpha)_{\alpha\in\mathscr A}$ be the family
of irreducible components of the polar divisor of~$\omega$ in~$X$.
By definition, one has
\[ \partial_n ([X,\omega]) = 
\sum_{\emptyset\neq A\subset \mathscr A}
(-1)^{\Card(A)-1}  \rho_{D_A}(X,\omega). \]
Fix a total order on~$\mathscr A$.
Let $(\alpha_1,\dots,\alpha_m)$ be a strictly increasing sequence
in~$\mathscr A$ and let $A=\{\alpha_1,\dots,\alpha_m\}$.
We have seen in~\S\ref{ss.iter}
that $\rho_{D_A}(X,\omega)$ can be defined 
via iterated residue maps: 
\[ \rho_{D_A}([X,\omega])=
[D_A, \rho_{D_{\alpha_1}}\circ \dots \circ \rho_{D_{\alpha_m}} (\omega)]
\cdot \mathbf T^{\Card(A)-1}
= [D_A,\omega_A]\cdot \mathbf T^{\Card(A)-1} 
 \]
where we wrote $\omega_A$ for
the composition 
$\rho_{D_{\alpha_1}}\circ \dots \circ \rho_{D_{\alpha_m}}(\omega)$.
When $\Card(A)$ is odd, we have 
\[ \partial (\rho_{D_A}([X,\omega])) = \partial([D_A,\omega_A])
\cdot \mathbf T^{\Card(A)-1}, \]
while when $\Card(A)$ is even, we have 
\[ \partial (\rho_{D_A}([X,\omega])) 
= - \partial([D_A,\omega_A])\cdot \mathbf T^{\abs A-1}
+ [D_A,\omega_A] \cdot \partial(\mathbf T^{\Card(A)-1}) . \]
Consequently, we have
\[ \partial \circ\partial ([X,\omega])
= \sum_{\emptyset\neq A\subset \mathscr A}
 \partial ([D_A,\omega_A])\cdot \mathbf T^{\Card(A)-1}
- \sum_{\substack{\emptyset\neq A\subset \mathscr A \\ \text{$\Card(A) $ even}}}
[D_A,\omega_A]\cdot \partial(\mathbf T^{\Card(A)-1}). 
\]

The polar divisor of the form
$\omega_A $ on~$D_A$
is equal to $\sum_{\beta\not\in A} D_\beta \cap D_A$, 
so that, by definition (and computation of~$\partial$ via iterated residues), 
\[ \partial([D_A,\omega_A])
= \sum_{\emptyset \neq B \subset \complement A}
(-1)^{\Card(B)-1} [D_{AB},\omega_{A\cup B}] \cdot \mathbf T^{\Card(B)-1}.\]
Also, when $A$ is nonempty and of even cardinality, 
$\partial(\mathbf T^{\Card(A)-1})=2\mathbf T^{\Card(A)-2}$.
When we put these two formulas into the antepenultimate one
and collect the various terms, we obtain
\[ \partial \circ\partial ([X,\omega])
= \sum_{\substack{ C\subset \mathscr A \\ 2\leq \Card(C)}} n_C [D_C,\omega_C]  \cdot \mathbf T^{\Card(C)-2},
\]
where
\[ n_C = - \sum_{\substack{\emptyset \neq A, B  \\
 A \cup B = C, A \cap B = \emptyset}} (-1)^{\Card(B)}
 -  2 \delta_{\text{$\Card(C)$ is even}}. \]
In the first sum, the terms $A=\emptyset $ or $B=\emptyset$ are omitted,
while  if we put them in, we obtain
\[ \sum_{\substack{A \cup B=C \\ A \cap B=\emptyset}}
(-1)^{\Card(B)} = \sum_{b=0}^{\Card(C)} \binom {\Card(C)}b (-1)^b
= (1-1)^{\Card(C)}=0 \]
since $\Card(C)\geq 1$.
Consequently, 
\[ n_C = 1 + (-1)^{\Card(C)} - 2 \delta_{\text{$\Card(C)$  is even}}
= 0. \]
This concludes the proof.
\end{proof}

\section{Algebraic structure of \(\Burn(k)\) after localization at~\(2\)}
\label{sec.alg}

In this section, we study the algebraic structure of
the Burnside ring $\Burn(k)$, endowed with its elements~$\eps$, $\mathbf T$
and the operator~$\partial$. 

\subsection{}
By construction, $\Burn(k)=\bigoplus_{n\geq 0}\Burn_n(k)$
is an associative unital $\Z_{\geq0}$-graded ring, $\eps\in\Burn_0(k)$,
$\mathbf T\in\Burn_1(k)$ and $\partial$ is a homogeneous
additive map of degree~$-1$. They satisfy the following relations,
for homogeneous elements $a,b\in\Burn(k)$:
\begin{align*}
\tag{1}
 & b\cdot a = \eps^{\abs a\abs b} \cdot a\cdot b 
	&& \text{(\S\ref{ss.commut});} \\
\tag{2}
 & \eps^2 = 1 && \text{(example~\ref{exem.eps});}  \\
\tag{3}
 & \mathbf T = \eps\cdot \mathbf T && \text{(example~\ref{exem.T});} \\
\tag{4}
 & \partial(\mathbf T) = 1+\eps && \text{(example~\ref{exem.residue-torus});} \\
\tag{5}
 & \partial(a\cdot b) = \eps^{\abs b}\cdot\partial(a)\cdot b+ a\cdot \partial(b) -\mathbf T\cdot\partial(a)\cdot\partial(b) && \text{(theorem~\ref{theo.partial-deriv});} \\
\tag{6}
 & \partial(\partial(a)) = 0 &&  \text{(theorem~\ref{theo.dd=0}).} 
\end{align*}
By~(1), the element~$\eps$ is central, and by~(2), we may view~$\Burn(k)$
as an algebra over $\Z[\eps]/(\eps^2-1)$. After inverting~2, 
the algebra $\Burn(k)$ splits into two components
$\Burn^{\eps=1}(k)$ and $\Burn^{\eps=-1}(k)$, one over which $\eps=1$,
and the other over which $\eps=-1$.

\emph{In the rest of this section, we implicitly assume that $2$ is inverted,
without changing the notation.}

\subsection{Sector $\eps=-1$}\label{sect.sectorminus}
Here, we have $\mathbf T=-\mathbf T$, 
hence $\mathbf T=0$ since $2$ is invertible.
As a consequence, after replacing~$\partial$ with
$\partial'\colon a\mapsto (-1)^{1+\abs a}\partial(a)$,
one gets from~(5) the usual graded Leibniz rule 
\[ \partial'(a\cdot b)=\partial'(a)\cdot b+(-1)^{\abs a}a\cdot\partial'(b) \]
and therefore
$\Burn^{\eps=-1}(k)$ 
is a classical differential graded (super\nobreak-)commutative algebra,
similar to, \eg, the de Rham complex.

\subsection{Sector $\eps=1$}
The algebra $\Burn^{\eps=1}$ is now commutative (and not graded commutative).
This reflects the intuition in our constructions that they
speak about volume forms (as opposed to top-degree differential forms) 
for which we have commutativity (as reflected by the change of order
of integration in multiple integrals). 

\begin{lemm}
The map $F\colon a \mapsto  a- \mathbf T\cdot \partial(a)$
is a ring endomorphism of $\Burn^{\eps=1}(k)$, and $F^2=\id$.
Moreover, one has $F \circ \partial =\partial = - \partial \circ F$.
\end{lemm}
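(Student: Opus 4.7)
The plan is to extract the identity $F \circ \partial = \partial$ immediately from relation~(6), then deduce multiplicativity of~$F$ by direct expansion of~$(a - \mathbf T \partial(a))(b - \mathbf T\partial(b))$ using the modified Leibniz rule~(5), and finally derive $F^2 = \id$ and $\partial \circ F = -\partial$ as short consequences once the key value $F(\mathbf T) = -\mathbf T$ is in hand. Throughout, I work in the sector $\eps=1$, so the ring is commutative and $\mathbf T$ is central, and relation~(4) reads $\partial(\mathbf T) = 2$, which is invertible only after localization at~$2$ but appears here in a form that does not require inversion.

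First I would record the two one-line computations that will be used repeatedly:
\[ F(\partial(a)) = \partial(a) - \mathbf T\,\partial^2(a) = \partial(a) \]
by~(6), and
\[ F(\mathbf T) = \mathbf T - \mathbf T\,\partial(\mathbf T) = \mathbf T - 2\mathbf T = -\mathbf T \]
by~(4). These already yield the equality $F\circ\partial = \partial$.

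Next I would verify multiplicativity. Expanding gives
\[ F(a)\,F(b) = ab - \mathbf T a\partial(b) - \mathbf T \partial(a) b + \mathbf T^2 \partial(a)\partial(b), \]
while applying~(5) to $F(ab) = ab - \mathbf T\partial(ab)$ gives exactly
\[ F(ab) = ab - \mathbf T\bigl(\partial(a)b + a\partial(b) - \mathbf T\partial(a)\partial(b)\bigr), \]
so $F(ab) = F(a)F(b)$ by commutativity and centrality of $\mathbf T$. Additivity being obvious and $F(1) = 1$, this shows $F$ is a ring endomorphism.

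Once $F$ is multiplicative, the remaining two identities follow formally. Using multiplicativity together with $F(\mathbf T) = -\mathbf T$ and $F(\partial a) = \partial a$, I get $F(\mathbf T \partial(a)) = -\mathbf T \partial(a)$, hence
\[ F^2(a) = F(a) - F(\mathbf T\partial(a)) = \bigl(a - \mathbf T\partial(a)\bigr) + \mathbf T\partial(a) = a. \]
For the last relation, I would compute $\partial\bigl(F(a)\bigr) = \partial(a) - \partial(\mathbf T\partial(a))$, and apply~(5) and~(6) to obtain $\partial(\mathbf T\partial(a)) = \partial(\mathbf T)\partial(a) = 2\partial(a)$, giving $\partial\circ F = -\partial$, equivalently $\partial = -\partial\circ F$. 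There is no real obstacle; the only place where one must be slightly careful is checking that the $\mathbf T\cdot\partial(a)\cdot\partial(b)$ correction term in~(5) is precisely what is needed to match the cross term $\mathbf T\partial(a)\cdot\mathbf T\partial(b) = \mathbf T^2\partial(a)\partial(b)$ appearing in $F(a)F(b)$, which is the content of the multiplicativity verification above.
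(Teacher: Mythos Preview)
Your proof is correct and follows essentially the same approach as the paper: both expand $F(a)F(b)$ directly and match it against $ab - \mathbf T\,\partial(ab)$ via relation~(5), and both obtain $F\circ\partial=\partial$ from $\partial^2=0$ and $\partial\circ F=-\partial$ from the computation $\partial(\mathbf T\,\partial(a))=2\partial(a)$. The only cosmetic difference is the order in which you derive $F^2=\id$: you use multiplicativity together with $F(\mathbf T)=-\mathbf T$ and $F(\partial a)=\partial a$, whereas the paper first establishes $\partial\circ F=-\partial$ and plugs that into $F^2(a)=F(a)-\mathbf T\,\partial(F(a))$; the two computations are equivalent one-liners.
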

\begin{proof}
This map is additive. One has 
$ F(1) = 1 - \mathbf T\cdot \partial(1)=1$. Let us show multiplicativity.
Indeed, for $a,b\in \Burn^{\eps=1}(k)$, one has
\begin{align*}
F(a)\cdot F(b) & = (a-\mathbf T\cdot \partial(a))\cdot (b-\mathbf T\cdot\partial(b)) \\
& = a\cdot b - \mathbf T\cdot \partial(a)\cdot b-\mathbf T\cdot a\cdot\partial(b)
+\mathbf T^2 \cdot\partial(a)\cdot\partial(b)\\
& = a\cdot b - \mathbf T\cdot (\partial(a)\cdot b+ a\cdot\partial(b)
-\mathbf T \cdot\partial(a)\cdot\partial(b)) \\
& = a\cdot b - \mathbf T\cdot \partial(a\cdot b) & \text{(using~(5))} \\
& = F(a\cdot b). 
\end{align*}

Since $\partial^2=0$, one has 
\[ F (\partial(a))= \partial(a)-\mathbf T\cdot \partial(\partial(a))=\partial(a). \]
On the other hand,
\begin{align*}
\partial(F(a))& = \partial (a-\mathbf T\cdot \partial(a))  \\
& = \partial(a) - \partial(\mathbf T\cdot \partial(a)) \\
& = \partial(a) - \partial(\mathbf T) \cdot \partial(a)
- \mathbf T \cdot \partial(\partial(a))+\mathbf T\cdot \partial(\mathbf T)
\cdot \partial(\partial(a)) \\
& = -\partial(a)
\end{align*}
using that $\partial(\mathbf T)=2$ and $\partial^2=0$.

Consequently, for $a\in\Burn^{\eps=1}(k)$, we have
\[
F^2(a)  = F(a) - \mathbf T\cdot \partial (F(a)) 
 = a - \mathbf T\cdot \partial(a) + \mathbf T \cdot \partial (a) 
 = a \]
since $\partial\circ F=-\partial$.
\end{proof}

\subsection{}
To simplify the notation, write $\mathscr B=\Burn^{\eps=1}(k)$.
Since $F^2=\id$ and $2$ is invertible, the algebra $\mathscr B$
splits as a direct sum 
\[ \mathscr B= \mathscr B_+ \oplus \mathscr B_-, \]
such that $F$ acts as~$\id$ on $\mathscr B_+$
and as $-\id$ on~$\mathscr B_-$. Moreover, $\mathscr B_+$ is a subalgebra.

Since the operator~$\partial$ anticommutes with~$F$,
it induces maps
\[ \partial_{\pm}\colon \mathscr B_+ \to \mathscr B_-,
\qquad \partial_{\mp}\colon \mathscr B_- \to \mathscr B_+. \]

Note that 
\[ F(\mathbf T) = \mathbf T- \mathbf T \cdot \partial(\mathbf T)
= - \mathbf T , \]
so that $\mathbf T\in\mathscr B_-$.
Consequently, the multiplication by~$\mathbf T$ map induces
two maps
\[ t_{\pm}\colon \mathscr B_+\to\mathscr B_-, \qquad 
t_{\mp}\colon \mathscr B_-\to\mathscr B_+. \]

\begin{lemm}
The map~$\partial$ vanishes on~$\mathscr B_+$.
Equivalently, $\partial_{\pm}=0$. 

The maps $\frac12 \partial_{\mp}$ and~$t_{\pm}$ are inverses
the one of the other.
\end{lemm}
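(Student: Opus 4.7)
The plan is to derive both assertions from three ingredients already in hand: the identity $\partial \circ F = -\partial$, the Leibniz rule (5) with $\eps=1$, and the characterization of $\mathscr B_\pm$ as the $\pm 1$-eigenspaces of~$F$.

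For the first assertion, let $a \in \mathscr B_+$, so $F(a)=a$. Applying~$\partial$ and using $\partial \circ F = -\partial$ gives $\partial(a) = \partial(F(a)) = -\partial(a)$, hence $2\partial(a)=0$. Since $2$ is invertible, $\partial(a)=0$, which is exactly $\partial_{\pm}=0$. This also confirms that $\partial$ restricted to $\mathscr B_-$ lands in $\mathscr B_+$: for $b \in \mathscr B_-$, the relation $F \circ \partial = \partial$ gives $F(\partial(b))=\partial(b)$, so $\partial(b) \in \mathscr B_+$. Symmetrically, $t_{\pm}$ sends $\mathscr B_+$ to $\mathscr B_-$ because $F(a \cdot \mathbf T) = F(a)\cdot F(\mathbf T) = a\cdot (-\mathbf T) = -a \cdot \mathbf T$ whenever $a\in\mathscr B_+$.

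For the second assertion, I compute the two compositions separately. Take $a \in \mathscr B_+$: the Leibniz rule~(5), combined with $\partial(a)=0$, $\eps=1$, and $\partial(\mathbf T)=1+\eps=2$ from~(4), gives
\[ \partial_{\mp}(t_{\pm}(a)) = \partial(a\cdot \mathbf T) = \partial(a)\cdot \mathbf T + a \cdot \partial(\mathbf T) - \mathbf T \cdot \partial(a)\cdot \partial(\mathbf T) = 2a, \]
so $\tfrac12 \partial_{\mp} \circ t_{\pm} = \id_{\mathscr B_+}$. Conversely, take $b \in \mathscr B_-$, so $F(b)=-b$; unwinding the definition of $F$ yields $b - \mathbf T \cdot \partial(b) = -b$, i.e. $\mathbf T \cdot \partial(b) = 2b$. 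Because $\mathscr B$ is commutative, this is $t_{\mp}(\partial_{\mp}(b)) = 2b$, whence $t_{\pm} \circ \tfrac12 \partial_{\mp} = \id_{\mathscr B_-}$.

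I do not expect any serious obstacle: once the identity $\partial \circ F = -\partial$ from the preceding lemma is in hand, the first statement is a one-line computation, and both compositions in the second statement reduce to a single application of~(5) (respectively the defining equation $F(b)=-b$). The only point deserving attention is to keep track of the fact that on $\mathscr B = \Burn^{\eps=1}(k)$ the product is honestly commutative, so that $\mathbf T \cdot \partial(b)$ and $\partial(b)\cdot \mathbf T$ may be identified without a sign.
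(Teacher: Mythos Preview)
Your proof is correct and follows essentially the same approach as the paper. The paper's argument for $\tfrac12\partial_{\mp}\circ t_{\pm}=\id$ is phrased slightly differently---it expands $\partial(\mathbf T\cdot a)=2a-\mathbf T\cdot\partial(a)$ and then rewrites this as $a+F(a)=2a$ rather than invoking $\partial(a)=0$ directly---but this is the same computation, and for the other composition both arguments unwind $F(b)=-b$ identically. (One trivial slip: in your last displayed identity you wrote $t_{\mp}(\partial_{\mp}(b))$ where you mean $t_{\pm}(\partial_{\mp}(b))$, since $\partial_{\mp}(b)\in\mathscr B_+$; your conclusion is stated correctly.)
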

\begin{proof}
For $a\in\mathscr B_+$, one has $\partial(a)=-\partial(F(a))=-\partial(a)$,
since $\partial\circ F=-\partial$, hence $\partial(a)=0$.

On the other hand, for $a\in\mathscr B_+$, one has
\[ \partial (\mathbf T\cdot a)
= 2 \cdot a+\mathbf T\cdot \partial(a)- 2\mathbf T\cdot \partial(a)
= 2a - \mathbf T\cdot\partial(a)=a + F(a) = 2a \]
while for $a\in\mathscr B_-$, we have 
\[ \mathbf T\cdot \partial(a) = a-F(a) = 2a. \]
This concludes the proof of the lemma.
\end{proof}
In particular, we see that the cohomology of the differential $\partial$ vanishes in the sector $ \Burn^{\eps=1}(k) =\mathscr B$.

\subsection{}
It follows from the lemma that we have a ring isomorphism
\[ \mathscr B = \mathscr B_+[t]/(t^2-\mathbf T^2), \]
from which we see that all the algebraic structure of~$\mathscr B_+$
(namely $\delta$, $\mathbf T$, $F$) can be canonically reconstructed from a unital commutative associative $\Z_{\geq0}$-graded ring~$\mathscr B_+$ endowed with an element in degree $+2$ (namely, the element $\mathbf T^2$).
\begin{rema}
The situation clarifies even more if we invert the class~$\mathbf T$.
Then we can write $\partial(a)=(a-F(a))/\mathbf T$,
and all relations happen to follow from the fact that $F$ is an involution
such that $F(\mathbf T)=-\mathbf T$.
Indeed,
\[
\partial^2(a) = \frac{\partial(a)-F(\partial(a))}{\mathbf T}
= \frac1{\mathbf T} \left( \frac{a-\partial(a)}{\mathbf T}
- F( \frac{a-\partial(a)}{\mathbf T}) \right) = 0 \]
explains that $\partial^2=0$.
Moreover, for $a,b\in\mathscr B$, we  have
\begin{align*}
\partial(a\cdot b) & = \frac{a\cdot b - F(a\cdot b)}{\mathbf T}
 = \frac{a\cdot b - F(a)\cdot F(b)}{\mathbf T} \\
& = \frac{a-F(a)}{\mathbf T} \cdot b + a \cdot \frac{b-F(b)}{\mathbf T} 
 - \mathbf T \cdot \frac{a- F(a)}{\mathbf T} \cdot \frac{b-F(b)}{\mathbf T} \\
 & =\partial(a)\cdot b+a\cdot\partial(b)-{\mathbf T}\cdot\partial(a)\cdot\partial(b).
\end{align*}
\end{rema}

\section{Birational morphisms preserving volume forms}
\label{sec.biraut}

\subsection{}
Let $(X,\omega_X)$ be a smooth integral $k$-variety of dimension~$n$
equipped with a logarithmic volume form, 
and let $f\colon Y\to X$ be a proper birational morphism.

Let $E$ be an exceptional divisor in~$Y$,
that is, such that $\dim(f(E))<\dim(E)$. 
By lemma~\ref{lemm.logarithmic-functoriality},
and lemma~\ref{lemm.residue-log}, 
the residue $\rho_E(f^*\omega_X)$ along~$E$ 
of the meromorphic form~$f^*\omega$ is a logarithmic volume
form on~$E$.

We define $\mathbf c(f;X,\omega)$ to be the sum
of all such classes $[E,\rho_E(f^*\omega)]$
in the free abelian group $\Burn_{n-1}(k)$.

\begin{lemm}\label{lemm.c-additivity}
Let $g\colon Z\to Y$ be a proper birational morphism
of smooth integral varieties  of dimension~$n$.
Then $g\circ f$ is a proper birational morphism
and one has
\[ \mathbf c(g\circ f;X,\omega) = \mathbf c(g;Y,f^*\omega) + \mathbf c(f;X,\omega) \]
in $\Burn_{n-1}(k)$.
\end{lemm}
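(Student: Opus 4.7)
The plan is to classify the prime divisors on $Z$ by their image under $g$ and match up the residue terms in the three expressions of the lemma.

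The key combinatorial observation is the disjoint decomposition
\[ \operatorname{Exc}(g\circ f) = \operatorname{Exc}(g) \sqcup \{\tilde E : E \in \operatorname{Exc}(f)\}, \]
where $\tilde E$ denotes the strict transform of a prime divisor $E\subset Y$ under $g$.  To establish this I would use that $g\colon Z\to Y$ is a proper birational morphism of smooth varieties, so its exceptional image in $Y$ has codimension at least $2$. Consequently, every prime divisor $E\subset Y$ has a unique strict transform $\tilde E\subset Z$, and $g$ restricts to an isomorphism of local rings $\mathscr O_{Y,\xi_E}\xrightarrow{\sim}\mathscr O_{Z,\xi_{\tilde E}}$ at the generic points (both DVRs in the common function field $k(Y)=k(Z)$). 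A dimension count then shows that a prime divisor $F\subset Z$ lies in $\operatorname{Exc}(g\circ f)$ if and only if either $\dim g(F)<n-1$ (so $F\in\operatorname{Exc}(g)$), or $g(F)$ is a prime divisor $E\subset Y$ with $\dim f(E)<n-1$ (so $F=\tilde E$ with $E\in\operatorname{Exc}(f)$); the two cases are disjoint, since in the first $g(F)$ is not a divisor.

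I would then split the defining sum $\mathbf c(g\circ f;X,\omega)=\sum_{F\in\operatorname{Exc}(g\circ f)}[F,\rho_F((g\circ f)^*\omega)]$ along this decomposition. For $F\in\operatorname{Exc}(g)$, the identity $(g\circ f)^*\omega=g^*(f^*\omega)$ gives $[F,\rho_F((g\circ f)^*\omega)]=[F,\rho_F(g^*(f^*\omega))]$, and summing over such $F$ yields exactly $\mathbf c(g;Y,f^*\omega)$ (which makes sense since $f^*\omega$ is logarithmic by lemma~\ref{lemm.logarithmic-functoriality}). For $F=\tilde E$ with $E\in\operatorname{Exc}(f)$, I would use the local isomorphism $\mathscr O_{Y,\xi_E}\xrightarrow{\sim}\mathscr O_{Z,\xi_{\tilde E}}$: a local equation $u$ of $E$ at $\xi_E$ pulls back under $g$ to a local equation of $\tilde E$ at $\xi_{\tilde E}$, so the explicit formula $\rho_E(\eta\wedge\dlog u+\eta')=\eta|_E$ transports verbatim, giving $[\tilde E,\rho_{\tilde E}((g\circ f)^*\omega)]=[E,\rho_E(f^*\omega)]$ in $\Burn_{n-1}(k)$. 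Summing the two pieces yields the claimed identity.

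The main obstacle is the combinatorial description of $\operatorname{Exc}(g\circ f)$: one must check rigorously that an exceptional divisor of $g\circ f$ that is not already in $\operatorname{Exc}(g)$ must arise as the strict transform of an exceptional divisor of $f$, and that distinct elements of $\operatorname{Exc}(f)$ yield distinct strict transforms. Once this is in hand, the matching of residues on strict transforms is essentially automatic from the fact that $g$ is an isomorphism near the generic point of each~$\tilde E$.
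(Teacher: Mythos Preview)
Your proposal is correct and follows essentially the same route as the paper: both arguments rest on the dichotomy that a prime divisor $F\subset Z$ is exceptional for the composite precisely when either $F\in\operatorname{Exc}(g)$, or $g(F)$ is a divisor in~$Y$ lying in $\operatorname{Exc}(f)$ (equivalently, $F$ is the strict transform of some $E\in\operatorname{Exc}(f)$), and then match residues termwise. If anything, you supply more justification than the paper does for the bijection $E\mapsto\tilde E$ and for the equality $[\tilde E,\rho_{\tilde E}(g^*(f^*\omega))]=[E,\rho_E(f^*\omega)]$ via the local isomorphism at generic points.
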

\begin{proof}
An integral divisor~$F$ in~$Z$ is exceptional for
$g\circ f$ if and only if one of the two 
mutually excluding situations happens:
\begin{itemize}
\item The divisor~$F$ is exceptional for~$g$;
\item Or $g(F)$ is a divisor in~$Y$ which is exceptional for~$f$.
\end{itemize}
Moreover, any divisor~$E$ in~$Y$ which is exceptional
for~$f$ appears once and only as  a divisor of the form~$g(F)$.
The contribution of~$F$ to~$\mathbf c(g\circ f;X,\omega)$
is given by the volume form~$\rho_F((g\circ f)^*\omega)$.
In the first case, we write
$\rho_F((g\circ f)^*\omega)=\rho_F(g^*(f^*\omega))$,
so that the contribution of~$F$ coincides with its contribution
to the term $\mathbf c(g;Y,f^*\omega)$.
In the second case, $g$ induces a birational isomorphism
from~$F$ to~$E=g(F)$;
writing $\rho_F((g\circ f)^*\omega)=g^*(\rho_F(f^*\omega))$,
we see that the contribution of~$F$ coincides with the
contribution of~$E$ to $\mathbf c(f;X,\omega)$.
This concludes the proof.
\end{proof}

\subsection{}
Let $(X,\omega_X)$ and $(Y,\omega_Y)$ be proper smooth $k$-varieties
equipped with logarithmic volume forms and 
let 
\[
\phi\colon (X,\omega_X) \dashrightarrow (Y,\omega_Y)
\]
be a birational map
preserving the volume forms.
By definition, this means that there exists a diagram
\[ \begin{tikzcd}[column sep = small]
 & W \ar{dl}[swap]{p} \ar{dr}{q} \\
X \ar[dashrightarrow]{rr}{\phi} && Y \end{tikzcd} \]
of integral $k$-varieties
such that $p$ and~$q$ are proper and birational,
and such that $p^*\omega=q^*\omega'$ on~$W$.
In this situation, we may assume that $W$ is smooth.

\begin{lemm}\label{lemm.c-independence}
With this notation, the element  
\[
\mathbf c(\phi)=\mathbf c(q)-\mathbf c(p) \in \Burn_{n-1}(k)
\]
only depends on the birational map~$\phi$,
and not on the choice of the triple $(W,p,q)$.
\end{lemm}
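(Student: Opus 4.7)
The strategy is the standard one: given two triples $(W_1,p_1,q_1)$ and $(W_2,p_2,q_2)$ resolving $\phi$, dominate both by a third smooth integral $k$-variety $W_3$ and reduce to the additivity lemma~\ref{lemm.c-additivity}. Concretely, the rational map $W_1 \dashrightarrow W_2$ induced by $\phi$ (well-defined on a dense open subscheme since both varieties are birational to $X$) has a graph whose closure $\Gamma$ dominates both $W_1$ and $W_2$; applying Hironaka's resolution of singularities to $\Gamma$ yields a smooth integral $k$-variety $W_3$ together with proper birational morphisms $r_i \colon W_3 \to W_i$ for $i=1,2$ such that $p_1 \circ r_1 = p_2 \circ r_2$ and $q_1 \circ r_1 = q_2 \circ r_2$.

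Having set up this common dominant $W_3$, I would then apply lemma~\ref{lemm.c-additivity} to each of the four compositions. Since the pullbacks agree on $W_i$ by assumption ($p_i^* \omega_X = q_i^* \omega_Y$), the additivity lemma gives
\[ \mathbf{c}(p_i \circ r_i; X, \omega_X) = \mathbf{c}(r_i; W_i, p_i^* \omega_X) + \mathbf{c}(p_i; X, \omega_X) \]
and, using the same quantity $\mathbf{c}(r_i; W_i, q_i^*\omega_Y) = \mathbf{c}(r_i; W_i, p_i^*\omega_X)$,
\[ \mathbf{c}(q_i \circ r_i; Y, \omega_Y) = \mathbf{c}(r_i; W_i, p_i^* \omega_X) + \mathbf{c}(q_i; Y, \omega_Y). \]
Subtracting the first identity from the second cancels the $\mathbf{c}(r_i)$ term, yielding
\[ \mathbf{c}(q_i \circ r_i; Y, \omega_Y) - \mathbf{c}(p_i \circ r_i; X, \omega_X) = \mathbf{c}(q_i; Y, \omega_Y) - \mathbf{c}(p_i; X, \omega_X) \]
for $i = 1, 2$. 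Since the composites $p_1 \circ r_1 = p_2 \circ r_2$ and $q_1 \circ r_1 = q_2 \circ r_2$ are equal as morphisms $W_3 \to X$ and $W_3 \to Y$ respectively, the two left-hand sides coincide, and therefore so do the two right-hand sides.

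The main obstacle is really just ensuring that a smooth common dominant $W_3$ exists with the required commutativity: one must verify that the rational map $W_1 \dashrightarrow W_2$ induced by $\phi$ is well-defined, and that after resolving the graph closure one genuinely obtains a \emph{smooth} $W_3$ whose two projections are proper and birational onto each $W_i$. This is immediate in characteristic zero from Hironaka's theorem. Once this is in hand, the rest is bookkeeping with lemma~\ref{lemm.c-additivity}. Note also that no smoothness of $W_i$ was even needed in the statement (one uses only smoothness of $W_3$), so the argument simultaneously shows that $\mathbf{c}(\phi)$ could have been defined via any triple $(W,p,q)$ of proper birational morphisms from an integral variety, provided one resolves to compute.
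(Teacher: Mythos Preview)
Your proof is correct and follows essentially the same approach as the paper: construct a smooth common dominant of the two resolving triples and apply lemma~\ref{lemm.c-additivity} to cancel the extra terms. The paper builds the dominant as a resolution of the fibred product $W_1\times_X W_2$ rather than of the graph closure of $W_1\dashrightarrow W_2$, but this is a cosmetic difference---both produce a smooth $W_3$ with the same commutativity properties.

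One small caveat: your closing remark that ``no smoothness of $W_i$ was even needed'' overstates things slightly. Lemma~\ref{lemm.c-additivity}, as formulated in the paper, is stated for smooth integral varieties, so applying it to the composition $p_i\circ r_i$ presupposes that $W_i$ is smooth. Your observation is morally right (one could always resolve first and compute there), but as written the argument does use smoothness of the intermediate $W_i$.
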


\begin{proof}
Consider two possible diagrams 
$X\xleftarrow p V\xrightarrow q Y$ 
and $X\xleftarrow r W\xrightarrow s Y$
describing~$\phi$. 
Considering for example a resolution
of singularities~$U$ of $V\times_X W$,
we can fit these two diagrams in a common commutative diagram
of the following form:
\[ \begin{tikzcd}[column sep = small]
 && U \ar{dl}[swap]{u} \ar{dr}{v} \\
 & V \ar{dl}[swap]{p} \arrow{drrr}[very near start]{q}
 	&& W \ar[crossing over]{dlll}[swap,very near start]{r\vphantom q} \ar{dr}{s} \\
X \ar[dashrightarrow]{rrrr}{\phi} &&&& 
Y 
\end{tikzcd} \]
The equalities $p^*\omega_X=q^*\omega_Y$
and $r^*\omega_X=s^*\omega_Y$ imply
that 
\[ (p\circ u)^*\omega_X = 
u^*p^*\omega_X = u^* q^*\omega_Y
=(q\circ u)^*\omega_Y = (s\circ v)^*\omega_Y. \]
By lemma~\ref{lemm.c-additivity}, we then have
\[
\mathbf c(p)-\mathbf c(q) 
 = \mathbf c(p\circ u)-\mathbf c(q\circ u)  
 =  \mathbf c(r\circ v)-\mathbf c(s\circ v)  
 = \mathbf c(r)-\mathbf c(s) . 
\]
This concludes the proof.
\end{proof}

\begin{theo}\label{theo.additivity}
If $\psi\colon (Y,\omega_Y)\dashrightarrow (Z,\omega_Z)$
is another birational map preserving volume forms,
then one has
\[ \mathbf c(\psi\circ\phi) = \mathbf c(\psi)+\mathbf c(\phi). \]
\end{theo}
\begin{proof}
Consider two diagrams 
$X\xleftarrow p V\xrightarrow q Y$ 
and $Y\xleftarrow r W\xrightarrow s Y$
describing~$\phi$ and~$\phi$. 
Considering for example a resolution
of singularities~$U$ of $V\times_Y W$,
we can fit these two diagrams in a common commutative diagram
of the following form:
\[ \begin{tikzcd}[column sep = small]
 && U \ar{dl}[swap]{u} \ar{dr}{v} \\
 & V \ar{dl}[swap]{p} \arrow{dr}{q}
 	&& W \ar{dl}[swap]{r\vphantom q} \ar{dr}{s} \\
X \ar[dashrightarrow]{rr}{\phi} && Y \ar[dashrightarrow]{rr}{\psi} && Z 
\end{tikzcd} \]
and the diagram $X\xleftarrow{p\circ u} U\xrightarrow{s\circ v}$
describes the birational map $\psi\circ \phi$.
Since $q\circ u=r\circ v$, we then have
\begin{align*}
\mathbf c(\psi\circ \phi) & = 
\mathbf c(p\circ u)-\mathbf c(s\circ v) \\
& = \mathbf c(p\circ u)-\mathbf c(q\circ u)
+ \mathbf c(r\circ v)- \mathbf c(s\circ v) \\
& = \mathbf c(p)-\mathbf c(q) + 
+ \mathbf c(r)- \mathbf c(s) \\
& = \mathbf c(\phi) + \mathbf c(\psi), 
\end{align*}
as was to be shown.
\end{proof}

\begin{coro}
Let $\Bir(X,\omega)$ be the set of birational automorphisms
of~$X$ preserving~$\omega$. The map~$\mathbf c$
induces a homomorphism of abelian groups
\[
\Bir(X,\omega)\to \Burn_{n-1}(k).
\]
Its kernel contains the group of automorphisms of~$X$
that preserve~$\omega$.
\end{coro}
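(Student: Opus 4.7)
The plan is to derive both assertions directly from Lemma~\ref{lemm.c-independence} (well-definedness of $\mathbf c$) and Theorem~\ref{theo.additivity} (additivity of $\mathbf c$) by computing $\mathbf c(\id_X)$ explicitly. First I would observe that $\mathbf c(\id_X)=0$: the identity birational self-map of $X$ is represented by the diagram with $W=X$ and $p=q=\id_X$, which is admissible because $\id^*\omega=\omega$. Since the identity morphism is an isomorphism, its exceptional locus is empty, and therefore both sums $\mathbf c(p;X,\omega)$ and $\mathbf c(q;X,\omega)$ defining these classes are empty, giving $\mathbf c(\id_X)=0-0=0$.

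Combining this with Theorem~\ref{theo.additivity} applied to $\phi$ and $\phi^{-1}$ yields
\[ 0=\mathbf c(\id_X)=\mathbf c(\phi\circ \phi^{-1})=\mathbf c(\phi)+\mathbf c(\phi^{-1}), \]
so $\mathbf c(\phi^{-1})=-\mathbf c(\phi)$. Together with the additivity $\mathbf c(\psi\circ\phi)=\mathbf c(\psi)+\mathbf c(\phi)$, this shows that $\mathbf c$ is a homomorphism from the group $\Bir(X,\omega)$ to the abelian group $\Burn_{n-1}(k)$.

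For the kernel statement, let $\phi\colon X\to X$ be a biregular automorphism satisfying $\phi^*\omega=\omega$. Then $\phi$, viewed as a birational map, is represented by the diagram $X\xleftarrow{\id_X} X\xrightarrow{\phi} X$, which is admissible since $\id^*\omega=\omega=\phi^*\omega$ on $W=X$. Both arrows are isomorphisms of smooth proper varieties and hence have empty exceptional loci, so $\mathbf c(\phi)=\mathbf c(\phi;X,\omega)-\mathbf c(\id_X;X,\omega)=0$ by Lemma~\ref{lemm.c-independence}. Thus every such $\phi$ lies in the kernel.

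There is no substantial obstacle here; the only point requiring care is to confirm that the trivial representing diagrams above satisfy the hypotheses used in the definition of $\mathbf c$, and to recall that the defining sum runs over exceptional divisors, which is vacuous whenever the underlying morphism is an isomorphism.
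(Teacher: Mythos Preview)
Your argument is correct and is exactly the natural derivation the paper implies: the corollary is stated without proof immediately after Theorem~\ref{theo.additivity}, and your use of the trivial diagram for the identity together with additivity is precisely the intended route. One small remark: once you have $\mathbf c(\psi\circ\phi)=\mathbf c(\psi)+\mathbf c(\phi)$ from the theorem, the relations $\mathbf c(\id_X)=0$ and $\mathbf c(\phi^{-1})=-\mathbf c(\phi)$ follow formally (any multiplicative map from a group to an abelian group is automatically a homomorphism), so the explicit computation of $\mathbf c(\id_X)$ via the trivial diagram is only strictly needed for the kernel statement.
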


\section{Specialization}

Let $K$ be the field of fractions of a discrete valuation ring~$R$
with residue field~$k$. 
Fix a uniformizer~$t\in R$.

In this context, 
\citet[\S3]{KontsevichTschinkel-2019} have defined
an additive \emph{specialization morphism}
\[\rho\colon  \burn_n(K)\to\burn_n(k), \]
relating the Burnside groups of~$K$ and~$k$ defined in \ref{ssec.burn}.
This map is \emph{not} multiplicative and 
they also modify the construction in \S5 to define a ring homomorphism
\[\rho_t\colon  \burn(K)\to\burn(k). \]
(Example~6.2 of~\citep{KreschTschinkel-2022} shows that
it actually depends on the choice of~$t$.) 

The goal of this section is to define a similar ring homomorphism
\[ \boldsymbol\rho_t \colon \Burn(K) \to \Burn(k) \]
for varieties with logarithmic volume forms.

The original construction of the invariant~$\rho_t$
is based on the introduction of an equivariant Burnside ring $\burn^\mu(k)$,
where $\mu$ is the profinite group scheme of roots of unity.
Indeed, in their paper, it is presented as a composition
\[ \rho_t\colon \burn(K) \to \burn^\mu(k) \to \burn(k), \]
where the second map is just forgetting the $\mu$-action.
However, the definitions of the ring $\burn^\mu(k)$
and of the first map $\hat\rho_t\colon \burn(K)\to \burn^\mu(k)$
are only sketched there; as a referee 
for the present paper pointed out, they are insufficient.
Here, we provide the necessary details.

\subsection{}\label{sec.kt}
Let us first recall the construction of the map~$\rho$
from~\citep{KontsevichTschinkel-2019}.
Let $\mathscr X$ be an integral proper scheme over~$R$,
of relative dimension~$n$,
whose special fiber~$\Delta$ is a divisor with strict normal crossings.

Let $(\Delta_\alpha)_{\alpha\in\mathscr A}$  
be the family of irreducible components of the special fiber~$\Delta$.
For every nonempty subset~$A$ of~$\mathscr A$,
let $\Delta_A$ be the intersection of all divisors~$\Delta_\alpha$;
let also $\Delta_A^\circ$
be the complement $\Delta_A\setminus \bigcup_{\alpha\not\in A}\Delta_\alpha$.

The first specialization morphism~$\rho$ of~\citep{KontsevichTschinkel-2019}
is defined by
\begin{equation}
 \rho( [\mathscr X_K]) 
= \sum_{\emptyset\neq A\subset\mathscr A} (-1)^{\Card(A)-1} [\Delta_A] 
\Lef^{\Card(A)-1},
\end{equation}
where $\Lef\in\burn(k)$ is the class of the affine line.
Before we proceed with  details about the construction of the
ring homomorphism~$\rho_t$, we need a refinement of this construction.

\begin{lemm}
Let $X$ be a proper smooth $K$-variety and $\mathscr X$ 
a proper $R$-model of~$X$ such that the inclusion $X\hookrightarrow \mathscr X$
is a toroidal embedding without self-intersection in the sense of
\cite[p.~57]{KempfKnudsenMumfordEtAl-1973}.
Let $(\Delta_\alpha)_{\alpha\in\mathscr A}$  
be the family of irreducible components of the special fiber~$\Delta$.
Then one has
\begin{equation}
 \rho( [\mathscr X_K]) 
= \sum_{\emptyset\neq A\subset\mathscr A} (-1)^{\Card(A)-1} [\Delta_A] 
\Lef^{\Card(A)-1}.
\end{equation}
\end{lemm}
\def\Tau{{\mathrm T}}
This lemma is implicit in the proof of the multiplicativity
of the refined invariant~$\rho_t$ in~\cite{KontsevichTschinkel-2019}.
A variant in the language of log-structures can be traced back to
\cite[Appendix]{NicaiseShinder-2019};
see also~\cite{BultotNicaise-2020}.
\begin{proof}
The scheme~$\mathscr X$ is not regular, 
but its singularities can be resolved by toroidal blow-ups.
Consider such a resolution $p\colon \mathscr Y\to\mathscr X$.
Let $(\mathrm E_\beta)_{\beta\in\mathscr B}$ be the familly of irreducible
components of the special fiber~$\mathrm E$ of~$\mathscr Y$.
By definition, one has 
\[ \rho(X) = \sum_{\emptyset\neq B\subseteq \mathscr B}
 (-1)^{\abs B-1}[\mathrm E_B] \Lef^{\abs B-1}. \]

The étale-local model for~$\mathscr X$ is a toric variety
$U_\sigma=\Spec(k[\sigma^\vee\cap \Z^n])$, where $\sigma$
is a simplicial cone in~$\R^n$.  Moreover, the $R$-structure
is given by a choice of a uniformizing element~$t\in R$
which corresponds to some element $m\in\sigma^\vee\cap\Z^n$.
In this toric chart, the resolution amounts to the combinatorial datum
of a subdivision of the cone~$\sigma$ into primitive simplicial subcones.
Let thus $\Tau$ be a fan with support~$\sigma$ 
whose cones are primitive and simplicial, 
let $U_\Tau$ be the associated toric variety, and
let $p\colon U_\Tau \to  U_\sigma$
be the canonical toric morphism. Note that $m\in\tau^\vee\cap\Z^n$
for every cone~$\tau$ of~$\Tau$.

The irreducible components~$\Delta_\alpha$ of the special fiber of~$\mathscr X$
that lie in the considered stratum correspond to the generators 
of the cone~$\sigma$, and strata~$\Delta_A$ correspond
to the subcones of~$\sigma$.
Similarly, the irreducible components~$\mathrm E_\beta$ 
that lie over this chart
correspond to the 1-dimensional fans  of~$\Tau$,
and the strata~$\mathrm E_B$ to subcones of the fan~$\Tau$.
For a subset $B\subset\mathscr B$, the map~$p$ induces
a morphism from the stratum~$\mathrm E_B$ 
to the stratum $\Delta_A$, 
where $A$ corresponds to the smallest subcone of~$\sigma$ containing~$\tau$,
with rational fibers. 
In particular, $[\mathrm E_B]=[\Delta_A] \Lef^{\abs A-\abs B}$.
Writing $B\mapsto A$ for this relation,
we thus can rewrite the definition of~$\rho(X)$ as
\[ \rho(X) = \sum_{\emptyset \neq A\subseteq \mathscr A}
 (-1)^{\abs A-1} [\Delta_A] \Lef^{\abs A-1}
 \sum_{B \mapsto A} (-1)^{\abs B-\abs A}. \]
The lemma follows by observing  
that for each~$A$, one has $\sum_{B\mapsto A} (-1)^{\abs B}=(-1)^{\abs A}$.
Indeed, $(-1)^{\abs B}$ is the Euler characteristic with compact support
of the open subcone corresponding to~$B$,  and the cone~$\sigma$
is their disjoint union.
\end{proof}

\subsection{}
We proceed to the construction of the invariant~$\rho_t
\colon \burn(K)\to\burn(k)$.
For every integer~$d\geq1$, we denote by~$R_d$
the finite $R$-algebra $R[u]/(u^d-t)$ and let $K_d$ be its field of fractions.
The extension $K_d$ of~$K$ is a $\mu_d$-torsor.

Let $X$ be a smooth proper $K$-variety.

By the semistable reduction theorem of 
\cite[chapter~II]{KempfKnudsenMumfordEtAl-1973},
there exists an integer~$d$ such that 
$X\otimes_K K_d$ has a 
\emph{semistable} proper model: it is regular, flat,
and its special fiber is 
a reduced divisor with strict normal crossings.

\begin{lemm}
For any multiple~$e$ of~$d$, one has $\rho(X\otimes_K K_e)=\rho(X\otimes_K K_d)$.
\end{lemm}
\begin{proof}
Replacing~$R$ by~$R_d$, we assume $d=1$.
Consider a semistable proper model~$\mathscr X$ of~$X$ over~$R$.
By \cite{KempfKnudsenMumfordEtAl-1973}, lemma~2, p.~103
(and the footnote on that page),
the base change  $\mathscr X\otimes_R R_e$ is normal, 
and its special fiber identifies with that of~$\mathscr X$.
(It is here that we use that we start from a \emph{semistable} model.)
The model $\mathscr X\otimes_R R_e$ is not regular in general, 
but it is a toroidal embedding without intersection of its generic fiber.
Consequently, the preceding lemma applies and we get the desired equality
$ \rho(X_e) =\rho(X)$.
\end{proof}
Thus one can define $\rho_t(X)$ as the class
$\rho(X\otimes_K K_d)$, for any integer~$d$ such that 
$X\otimes_K K_d$ admits a semistable model over~$R_d$.
Then, the arguments at the end of
the proof of~\cite[theorem~18]{KontsevichTschinkel-2019}
prove that the additive extension $\rho_t \colon \burn (K)\to\burn(k)$
is multiplicative.

\begin{rema}
For $\alpha\in\mathscr A$, let $e_\alpha$ be the multiplicity
of~$\Delta_\alpha$ in~$\Delta$;
for any nonempty subset~$A\subset\mathscr A$, let~$e_A$ 
be the greatest common divisor of
the~$e_\alpha$, for $\alpha\in A$.

As explained in \cite{KempfKnudsenMumfordEtAl-1973},
see also \cite[\S2.3]{Nicaise-2013},
especially proposition~2.3.2,
one can compute the normalization of~$\mathscr X\otimes R_d$
in terms of the given model~$\mathscr X$.
This gives an explicit decomposition of~$\rho_t(X)$
as a sum
\[ \sum_{\emptyset \neq A\subset \mathscr A_o}
 (-1)^{\Card(A)-1} [D'_A] \cdot \Lef ^{\Card(A)-1}, \]
where $\nu_A\colon D'_A\to D_A$ is some $\mu_{e_A}$-torsor defined as follows.

We identify the normal bundle of~$\Delta_A$ in~$\mathscr X$
as a direct sum of line bundles:
\[ \mathscr N_{\Delta_A}(\mathscr X) \simeq \bigoplus_{\alpha\in A}
  \mathscr N_{\Delta_\alpha}(\mathscr X)|_{\Delta_A} . \]
Let us consider its open subscheme~$\mathscr N_{\Delta_A}^\circ(\mathscr X)$
obtained by restricting to~$\Delta_A^\circ$
and taking out all “coordinate” hyperplanes.
This furnishes a morphism
\[ \nu_A \colon \mathscr N_{\Delta_A}^\circ(\mathscr X) \to
 \bigotimes_{\alpha\in A} \mathscr N_{\Delta_\alpha}(\mathscr X)^{\otimes e_\alpha}|_{\Delta_A^\circ} . \]
Since the uniformizer~$t$ has divisor 
$-\sum_{\alpha\in\mathscr A} e_\alpha \Delta_\alpha$ on~$\mathscr X$,
it trivializes the line bundle on the target of~$\nu_A$.
We set $\Delta'_A=\nu_A^{-1}(t)$.
By construction, the projection $\Delta'_A\to \Delta_A$ 
is a torsor with group~$\mu_{e_A}$.

However, contrary to the expectation of~\cite{KontsevichTschinkel-2019},
it is not clear how this construction lifts to a $\burn^{\mu}(k)$-valued
invariant. Note that the equivariant Burnside groups defined
in~\cite{KreschTschinkel-2022} have additional, nontrivial, relations.
\end{rema}

\subsection{}
Let us now explain how to define analogous specialization
homomorphisms in the context
of Burnside groups with volume forms.

For simplicity, we only consider
the case where $K$ has transcendence degree~$1$ over~$k$,
in which case the idea can be explained geometrically as follows.
We assume that there exists a smooth integral curve~$C$
together with a $k$-point $o\in C(k)$
such that $K=k(C)$ and $R=\mathscr O_{C,o}$.
We fix a local parameter $t\in R$ such that $V(t)=o$.

Let us consider a pair $(X,\omega)$ consisting of an integral
proper $K$-variety~$X$ of dimension~$n$
and a logarithmic $n$-form~$\omega$ on~$X$.

\subsection{}\label{ss.regularmodel}
Consider a regular flat proper model~$\mathscr X$ of~$X$ over~$C$,
let $\Delta=(\mathscr X_o)_\red$ be its reduced special fiber, and 
consider a divisor~$\mathscr D$ with relative normal crossings on~$\mathscr X$.
We assume that the divisor $\Delta+ \mathscr D$ has normal crossings.
Let $(\Delta_\alpha)_{\alpha\in\mathscr A}$ be the
family of irreducible components of~$\Delta$;
for $A\subset\mathscr A$, set $\Delta_A=\bigcap_{\alpha\in A}\Delta_\alpha$.
Similarly, let $(\mathscr D_\beta)_{\beta\in\mathscr B}$
be the family of irreducible components of~$\mathscr D$,
and for $B\subset\mathscr B$, let 
$\mathscr D_B=\bigcap_{\beta\in \mathscr B}\mathscr D_\beta$.

In this situation, 
\citet[\S3.3.2]{Deligne-1970} says that 
a meromorphic relative differential $m$-form on~$\mathscr X/C$
is logarithmic with respect to~$\Delta+\mathscr D$ 
if it is (locally) 
the image of a logarithmic $m$-form~$\tilde\omega$
in~$\Omega^m_{\mathscr X/k}$ with poles contained in $\Delta+\mathscr D$
under the  natural
morphism $\Omega^m_{\mathscr X/k}\to\Omega^m_{\mathscr X/C}$.

Consider a logarithmic relative $n$-form~$\omega$ on~$\mathscr X/C$.
We consider an associated volume form~$\omega'$ on~$\mathscr X$, 
defined locally by
\[ \omega' = \tilde \omega \wedge \mathrm dt/t, \]
where $\tilde\omega$ is any local lift of~$\omega$.
This form $\omega'$ is logarithmic
and we can compute its “residue along~$\Delta$” as
in~\S\ref{sec.residues}, only taking into account
the strata of the polar divisor of~$\omega'$
which are contained in the special fiber~$\Delta$.

There exists a subset $\mathscr A_o$ of~$\mathscr A$
and a subset~$\mathscr B_o$ of~$\mathscr B$
such that 
the polar divisor of~$\omega'$ is given by 
\[ \sum_{\alpha\in\mathscr A_o}  \Delta_\alpha + \sum_{\beta\in\mathscr B_o} \mathscr D_\beta. \]
We thus set
\[ \rho_t(\mathscr X,\omega)
 = \sum_{\substack{\emptyset \neq A \subset \mathscr A_o \\ B \subset\mathscr B_o} }(-1)^{\Card(A)+\Card(B)-1}
\rho_{\Delta_A\cap \mathscr D_B}(\mathscr X,\omega). \]
This is an element of $\Burn_n(\mathscr X_o/k)$.
\begin{prop}
Let $\mathscr Y$ be an irreducible smooth closed subscheme of~$\mathscr X$
which is transverse to $\Delta + \mathscr D$
and let $g\colon \mathscr X'\to\mathscr X$ 
be the blowing-up of~$\mathscr X$ along~$\mathscr Y$.
The form $g^*\omega$ on~$\mathscr X'$ is logarithmic and 
we have
\[ g_* \rho_t (\mathscr X',g^*\omega) = \rho_t (\mathscr X,\omega) \]
in $\Burn_n(\mathscr X_o/k)$.
\end{prop}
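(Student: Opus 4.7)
My approach is to adapt the proof of proposition~\ref{prop.good-blow-up} to the relative setting. The first step is to observe that the locally defined form $\omega' = \tilde\omega \wedge \mathrm dt/t$ is a globally logarithmic volume form on $\mathscr X$ with polar divisor $\sum_{\alpha \in \mathscr A_o} \Delta_\alpha + \sum_{\beta \in \mathscr B_o} \mathscr D_\beta$. Since $\mathscr Y$ is transverse to $\mathscr D+\Delta$, the blow-up $\mathscr X'$ is smooth, the sum of the exceptional divisor $E$ and the strict transforms $\Delta'_\alpha$, $\mathscr D'_\beta$ is a divisor with strict normal crossings, and lemma~\ref{lemm.logarithmic-functoriality} implies that $g^*\omega'$ is logarithmic on $\mathscr X'$. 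In particular $g^*\omega$ is logarithmic as a relative form.

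Let $A_0\subset\mathscr A_o$ and $B_0\subset\mathscr B_o$ be the sets of indices with $\mathscr Y\subset\Delta_\alpha$, respectively $\mathscr Y\subset\mathscr D_\beta$, so that $\Delta_{A_0}\cap\mathscr D_{B_0}$ is the minimal stratum of the polar divisor containing $\mathscr Y$. Following proposition~\ref{prop.good-blow-up}, I would split into two cases. If $\dim(\mathscr Y)<\dim(\Delta_{A_0}\cap\mathscr D_{B_0})$, then the Jacobian of $g$ vanishes along $E$ to an order that absorbs the pole of $\omega'$, so $g^*\omega'$ is regular along $E$; each pushforward $g_*\rho_{\Delta'_A\cap\mathscr D'_B}(\mathscr X',g^*\omega')$ equals $\rho_{\Delta_A\cap\mathscr D_B}(\mathscr X,\omega')$ and the identity follows term-by-term. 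If $\mathscr Y$ is an irreducible component of $\Delta_{A_0}\cap\mathscr D_{B_0}$, then $E$ acquires a logarithmic pole of $g^*\omega'$, and the strata in the special fiber split into those avoiding $E$ (whose pushforward gives the corresponding stratum of $\omega'$) and those meeting $E$ (whose pushforward gives strata of the form $\Delta_{A\cup A_0}\cap\mathscr D_{B\cup B_0}$).

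Collecting terms, $g_*\rho_t(\mathscr X',g^*\omega)$ can be rewritten as a sum $\sum_{(C,D)} n'_{C,D}\,\rho_{\Delta_C\cap\mathscr D_D}(\mathscr X,\omega)$ with $C\neq\emptyset$, and one verifies that $n'_{C,D}=(-1)^{\Card(C)+\Card(D)-1}$ via precisely the alternating-sum identity of proposition~\ref{prop.good-blow-up}, applied now to subsets of $A_0\cup B_0$. The main obstacle is the delicate bookkeeping when $A_0=\emptyset$, i.e.\ when $\mathscr Y$ is not contained in the special fiber. In that case, the exceptional divisor $E$ is \emph{horizontal} rather than \emph{vertical}, so $E$ itself does not contribute a special-fiber stratum, and one must check that the constraint $C\neq\emptyset$ in the definition of $\rho_t$ is preserved throughout the cancellation. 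This reduces to verifying that the $A$-coordinate remains inert while the combinatorial sum collapses on the $B$-coordinate, yielding the desired identity.
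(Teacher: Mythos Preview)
Your approach is correct but takes a more laborious route than the paper. You essentially redo the combinatorial argument of proposition~\ref{prop.good-blow-up} from scratch in the relative setting, tracking separately the vertical indices~$A$ and horizontal indices~$B$, and then handling the delicate case $A_0=\emptyset$ where the exceptional divisor is horizontal.

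The paper's proof is shorter and avoids all of this bookkeeping. It simply observes that the difference
\[
\rho(\mathscr X,\tilde\omega) - \rho_t(\mathscr X,\omega)
\]
is precisely the part of the full residue $\rho(\mathscr X,\tilde\omega)$ supported over $\mathscr X\setminus\mathscr X_o$. Since proposition~\ref{prop.good-blow-up} already gives $g_*\rho(\mathscr X',g^*\tilde\omega)=\rho(\mathscr X,\tilde\omega)$ for the full residue, and since the analogous identity holds for the part over the open complement $\mathscr X\setminus\mathscr X_o$ (same proposition, applied on the generic fiber side), subtracting the two identities yields the claim for~$\rho_t$ immediately. In effect, the paper treats $\rho_t$ as a restriction of an object already known to be blow-up invariant, rather than proving invariance directly.

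Your approach has the virtue of being self-contained and making explicit why the constraint $A\neq\emptyset$ survives the cancellation, which the paper's subtraction argument leaves implicit. But the paper's method is cleaner precisely because it offloads the combinatorics to the absolute case, where it has already been done once.
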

\begin{proof}
With the notation of~\S\ref{sec.residues}, the difference
\[ \rho (\mathscr X,\tilde\omega)  - \rho_t(\mathscr X,\omega) \]
is exactly the  part of $\rho(\mathscr X,\tilde\omega)$ 
which lies over the complement of the special fiber~$\mathscr X_o$
in~$\mathscr X$.
We have seen in theorem~\ref{theo.residue-bir} that
\[ g_* \rho(\mathscr X',\tilde\omega') = \rho(\mathscr X,\omega), \]
and a similar formula holds over $\mathscr X\setminus\mathscr X_o$.
This implies the proposition.
\end{proof}

\subsection{}
Starting from a smooth proper $K$-variety~$X$
and a logarithmic volume form~$\omega$ on~$X$,
we can define a model~$\mathscr X/C$, with $\Delta$ and~$\mathscr D$ 
as above, but the form~$\omega$
will not necessarily extend to a logarithmic relative form with respect
to $\Delta + \mathscr D$,
nor does 
the volume form~$\tilde\omega$ on~$\mathscr X$.
However, this can be achieved by multiplying~$\omega$
by a suitable power of the uniformizing element.

Similarly to~\S\ref{ss.regularmodel},
let us write the divisor of~$\tilde\omega$ on~$\mathscr X$ as
\[ \div_{\mathscr X}(\tilde\omega) 
= \sum_{\alpha\in\mathscr A} d_\alpha \Delta_\alpha
 + \sum_{\beta\in\mathscr B} d_\beta \mathscr D_\beta. \]
With this notation, the condition for~$\tilde\omega$ to be logarithmic
on~$\mathscr X$ is just that 
\[ d_\alpha \geq -1, \qquad d_\beta\geq -1. \]
In particular, while the conditions at the horizontal components follow 
from their counterparts on the generic fiber, 
those for the vertical components are not automatic.
On the other hand, for any $\kappa\in\Z$, the form $t^\kappa \tilde\omega$
is logarithmic if and only if 
\[ \kappa e_\alpha + d_\alpha \geq -1 \]
for all $\alpha\in\mathscr A$, that is, if and only if
$ \kappa \geq \kappa(\omega)$, 
where $\kappa(\omega)$ is defined by 
\[ \kappa(\omega) = - \inf_{\alpha\in\mathscr A} \frac{1+d_\alpha}{e_\alpha}. \]
% kappa e_a \geq -(1+d_a)
% kappa \geq - (1+d_a)/e_a , ∀ a
% \kappa \geq \sup -(...)
% \kappa \geq - \infimum
Since the rational number
$\kappa(\omega)$ is defined in terms of logarithmic forms,
it only depends on the class of $(X,\omega)$ in~$\Burn_n(K)$,
and not on the actual model which is chosen to compute it.

\subsection{}
We assume for the moment that $\kappa(\omega) \in\Z$. 
This holds in particular if the special fiber~$\mathscr X_o$ is reduced.
Let then $\mathscr A_o$ be the subset of~$\mathscr A$
consisting of all $\alpha$ such that 
\[ \kappa (\omega) e_\alpha+d_\alpha = -1,\]
and let $\mathscr B_o$ be the subset of~$\mathscr B$
consisting of all $\beta$ such that $d_\beta=-1$.
The polar divisor of $t^\kappa\tilde\omega$ is equal to
\[ \sum_{\alpha\in\mathscr A_o} \Delta_\alpha
 + \sum_{\beta\in\mathscr B_o} \mathscr D_\beta, \]
and we set
\[ \rho_t(\mathscr X,\omega) = \rho_t(\mathscr X,t^{\kappa(\omega)}\omega) \]
in $\Burn_n(\mathscr X_o/k)$.

In the particular case where $\mathscr D$ is empty, 
the strata of the Clemens complex of the special fiber 
that actually appear in the definition of this class
are those defined by~\cite{KontsevichSoibelman-2006},
more precisely, by the adjustment provided by~\cite{MustataNicaise-2015a}.

\subsection{}
In the general case, the rational number~$\kappa(\omega)$
is not an integer.  
Let us consider the finite ramified extension~$K_d$ of~$K$, 
whose ramification index~$d$
is a multiple of the denominator of~$\kappa(\omega)$,
but which induces an isomorphism on the residue field.
Geometrically, this furnishes a morphism $\pi\colon C_d\to C$
which is ramified at the point~$o$, together
with a lift of~$o$ in~$C_d(k)$ (still denoted by~$o$),
and a distinguished uniformizing element~$t^{1/d}$.

We consider the extension of $(X,\omega)$ to~$K_d$ and
introduce a model $(\mathscr X_d,\omega_d)$ as above, over~$C_d$. 
Now, the corresponding $\kappa$-parameter is integral, so that 
any choice of a uniformizing element~$t^{1/d}$ in~$R_d$
induces a class 
$\rho_{t^{1/d}}(\mathscr X_d,\omega_d)$
in~$\Burn(k)$.
Combining these classes, we obtain the desired
group homomorphism
\[ \boldsymbol\rho_t \colon \Burn(K)  \to \Burn(k). \]
In this context, the variant of theorem~18 of~\citep{KontsevichTschinkel-2019}
is the following theorem.
\begin{theo}
The morphism~$\boldsymbol\rho_t\colon\Burn(K)\to\Burn(k)$ is a ring  homomorphism.
\end{theo}
\begin{proof}
The statement of theorem~18 in
\citep{KontsevichTschinkel-2019} is incorrect,
because it involves the $\burn^\mu(k)$-valued “invariant” on~$\burn(K)$
which is not well defined.
However, as we explained in the beginning of this section, 
this invariant becomes well defined after forgetting the $\mu$-torsor component,
and becomes the map denoted by~$\rho_t$ above.
The rest of their proof consists in explicitly
resolving the toric singularities
of a fiber product of two snc models and checking the 
required cancellations in $\burn(k)$.
Keeping track of the various logarithmic volume forms on the strata,
the same argument proves our theorem.
\end{proof}

\begin{rema}
In the case  of specialization of rationality, it has proved
fruitful to consider models with singularities on the special fiber,
mild enough so that 
the special fiber computes the specialization of
the birational type of the generic fiber.
This is in particular the case for rational double points.
A parallel study can be developped in the context
of varieties with logarithmic forms.
\end{rema}

\bibliography{burnvol}

\end{document}